\newcommand{\Spec} {\operatorname{Spec}}
\newcommand{\coker} {\operatorname{coker}}
\newcommand{\rk} {\operatorname{rk}}
\renewcommand{\mathcal}{\mathscr}
\title[Higher Gauss Maps of Veronese Varieties]%
{Higher Gauss Maps of Veronese Varieties \\ 
---a generalization of Boole's formula and \\
degree bounds for higher Gauss map images---}
\author{Hajime Kaji}
\thanks{%
Department of Mathematics, School of Science and Engineering, 
Waseda University, 
\\
\indent
Ohkubo 3-4-1, Shinjuku, Tokyo 169--8555, JAPAN. 
\\
\indent
\textit{E-mail address}: \texttt{kaji@waseda.jp}%
}
\address{%
Department of Mathematics, School of Science and Engineering, 
Waseda University \\
\indent
Ohkubo 3-4-1, Shinjuku, Tokyo 169--8555, JAPAN}
\email{kaji@waseda.jp}
\subjclass[2000]{%
14N05
}
\keywords{%
higher Gauss map, Veronese variety, dual variety, Boole's formula, 
degree bound
}
\theoremstyle{plain}
	\newtheorem{theorem}{Theorem}[section]
	\newtheorem*{theorem*}{Theorem}
	\newtheorem{corollary}[theorem]{Corollary}
	\newtheorem{proposition}[theorem]{Proposition}
	\newtheorem{lemma}[theorem]{Lemma}
	\newtheorem{conjecture}[theorem]{Conjecture}
	\newtheorem{fact}[theorem]{Fact}
\theoremstyle{definition}
	\newtheorem{example}[theorem]{Example}
	\newtheorem{observation}[theorem]{Observation}
\theoremstyle{remark}
	\newtheorem{remark}[theorem]{Remark}
\numberwithin{equation}{section}
\newcommand{\kaigyouhaba}{6pt}
\begin{document}

\begin{abstract} 
The image of the higher Gauss map for a projective variety is discussed. 
The notion of higher Gauss maps here 
was introduced by Fyodor L. Zak 
as a generalization of both ordinary Gauss maps and conormal maps. 
The main result is a closed formula for the degree of those images of Veronese varieties.  
This yields a generalization of 
a classical formula by George Boole 
on the degree of the dual varieties of Veronese varieties in 1844. 
As an application of our formula, 
degree bounds for 
higher Gauss map images
of Veronese varieties are given. 
\end{abstract}

\maketitle

\setcounter{section}{-1}

\section{Introduction}

In 1844, 
George Boole proved a beautiful theorem 
that the locus of singular hypersurfaces of degree $d$ in $\mathbb P^n$ 
is a hypersurface 
of degree $(n+1)(d-1)^n$ 
in the linear system $\vert \mathcal O_{\mathbb P^n}(d) \vert$ 
under the assumption that the characteristic of the ground field is equal to zero 
(\cite[p.19]{boole1841} and \cite[p.171]{boole1844}). 
This hypersurface 
is nothing but 
the dual variety 
$X^* \subseteq \check{\mathbb P}^N$ 
of the Veronese variety $X=v_d(\mathbb P^n) \subseteq \mathbb P^N$ 
via 
$
\vert \mathcal O_{\mathbb P^n}(d) \vert
=
\check{\mathbb P}^N
$ with $N+1 = \binom{n+d}{d}$, 
as it is easily shown, 
where $v_d$ is the $d$-fold Veronese embedding $\mathbb P^n \hookrightarrow \mathbb P^N$, 
and 
the dual variety of $X$ is 
by definition the variety of tangent hyperplanes to $X$ 
in the dual projective space $\check{\mathbb P}^N$ of $\mathbb P^N$. 
His theorem is thus stated also as follows:

\smallskip 

\noindent
\textbf{Boole's Formula.} 
\textit{%
The degree of the dual variety of  
the Veronese variety $X = v_d(\mathbb P^n)$ is given by
}
\begin{equation}\label{equation:Boole's_formula}
\deg X ^* = (n+1)(d-1)^n . 
\end{equation} 

\smallskip

The purpose of this article is to generalize Boole's formula 
to the case of the variety of linear spaces tangent to $X$ 
instead of the dual variety, 
where we put the generalization 
introduced by Fyodor L. Zak \cite{zak1987} in context.

For a non-degenerate projective variety $X \subseteq \mathbb P^N$ of dimension $n$
over an algebraically closed field $k$ of arbitrary characteristic $p$, 
set 
$$
Z_m := 
\overline{%
\coprod_{x \in X\text{ : non-singular}} 
\{(x, W) \vert T_xX \subseteq W (\subseteq \mathbb P^N)%
\}
}
\subseteq X \times  \mathbb G(m+1, V) , 
$$
and denote by 
$\gamma_m : Z_m \to \mathbb G(m+1, V)$ the projection, 
where 
$\mathbb G(m+1, V)$ is the Grassmann variety 
parametrizing  
$(m+1)$-quotient spaces of $V:= H^0(\mathbb P^N , \mathcal O_{\mathbb P^N}(1))$, 
or equivalently, 
parametrizing $m$-planes in $\mathbb P^N$. 
The morphism 
$\gamma_m : Z_m \to \mathbb G(m+1, V)$ 
is 
called the \textit{$m$-th Gauss map} of $X\subseteq \mathbb P^N$,  
and its image 
$X_m^* := \gamma_m (Z_m)$ in $\mathbb G(m+1, V)$ 
is called 
the \textit{variety of $m$-dimensional tangent spaces} to $X$ 
(\cite{zak1987}, \cite{zak1993}).
Note that 
$X_{m}^{*}$ is a projective variety in $\mathbb P^{\binom{N+1}{m+1}-1}=\mathbb P(\wedge^{m+1}V)$ 
via the Pl\"ucker embedding $\mathbb G_{X}(m+1,V) \hookrightarrow \mathbb P(\wedge^{m+1}V)$ 
(with reduced structure). 
Note also that $\gamma_{n}$ is the ordinary Gauss map of $X$, and 
that $X_{N-1}^*=X^{*}$ is the dual variety, 
$\gamma_{N-1}$ is the conormal map, and 
$Z_{N-1} = \mathbb P_{X}(\mathcal N)$ is the conormal variety, 
where $\mathcal N$ denotes 
the twisted normal bundle 
$N_{X/\mathbb P^{N}} \otimes \mathcal O_{X}(-1)$ 
of $X$ in $\mathbb P^{N}$
(\cite{fulton}, \cite{katz}, \cite{kleiman1977}, \cite{tevelev}, \cite{zak1987}, \cite{zak1993}).

Our main result is

\begin{theorem}\label{theorem:main_theorem}
For the Veronese variety $X=v_d(\mathbb P^n)
\subseteq \mathbb P^{N}$ with $N+1 = \binom{n+d}{d}$, 
the $m$-th Gauss map $\gamma_m$ is generically finite 
and $\dim X_{m}^{*} = n+(N-m)(m-n)$ 
for each $m$ with $n \le m \le N-1$. 
Moreover if the characteristic $p$ 
of $k$ is equal to zero, then 
the degree of $X_{m}^{*}$ 
in $\mathbb P^{\binom{N+1}{m+1}-1}$ is given by 
\begin{align*}
\deg X_{m}^* 
&=
\frac{1}{n!(n+1)^{n}}
\deg X_{n}^{*} 
\sum_{ \vert \lambda \vert = n} 
f^{\lambda}f^{\lambda + \varepsilon}
\prod_{i=1}^{N-m}
\frac{(n+i)! }{(n+i-\lambda_i)! }
\end{align*}
with $\deg X_{n}^{*} = (n+1)^{n}(d-1)^{n}$, 
where $\lambda =(\lambda_1, \dots , \lambda_{N-m})$ is a partition 
with $\vert \lambda \vert := \sum_{i=1}^{N-m}\lambda_i$, 
$\varepsilon := ((m-n)^{N-m})$ is $N-m$ copies of the integer $m-n$, 
and 
$f^{\lambda}$ is the number of standard Young tableaux with shape $\lambda$. 
\end{theorem}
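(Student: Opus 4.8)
The plan is to present $Z_m$ as a Grassmann bundle over $X=v_d(\mathbb P^n)\cong\mathbb P^n$ and to evaluate $\deg X_m^*$ as one intersection number there. Let $\mathcal P^1:=\mathcal P^1_{\mathbb P^n}(\mathcal O_{\mathbb P^n}(d))$ be the bundle of first principal parts; the embedded tangent space $T_xX$ is the fibre of the canonical surjection $V\otimes\mathcal O_X\twoheadrightarrow\mathcal P^1$, so with $\mathcal K:=\ker(V\otimes\mathcal O_X\to\mathcal P^1)$, of rank $N-n$, an $m$-plane containing $T_xX$ is the same datum as a rank-$(N-m)$ subspace of $\mathcal K_x$. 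Hence $Z_m\cong\mathbb G(N-m,\mathcal K)$, the bundle $\pi\colon Z_m\to\mathbb P^n$ of rank-$(N-m)$ subbundles $\mathcal S\subseteq\pi^*\mathcal K$, which already yields $\dim Z_m=n+(N-m)(m-n)$. Writing $\mathcal W=(V\otimes\mathcal O)/\mathcal S$ for the universal $(m+1)$-quotient, one has $\gamma_m^*\mathcal O(1)=\det\mathcal W=\det\mathcal S^\vee$ (since $\det V$ is trivial); I denote this relative Pl\"ucker class $\sigma_1:=c_1(\mathcal S^\vee)$. I would first prove that $\gamma_m$ is generically finite --- so that $\dim X_m^*=\dim Z_m$ --- by checking that its differential is injective at a general $(x,W)$, using the nondegeneracy of the second fundamental form of the Veronese; and then that $\gamma_m$ is birational onto its image in characteristic $0$ (a general tangent $m$-plane touches $X$ at a single point). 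Granting these, $\deg X_m^*=\int_{Z_m}\sigma_1^{\,D}$ with $D=\dim Z_m$.

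Everything then rests on one Chern-class computation on $\mathbb P^n$. With $h=c_1(\mathcal O(1))$, the twist of the Euler sequence gives $c(\Omega^1_{\mathbb P^n}(d))\,(1+dh)=(1+(d-1)h)^{n+1}$, while the jet sequence $0\to\Omega^1(d)\to\mathcal P^1\to\mathcal O(d)\to 0$ gives $c(\mathcal P^1)=(1+dh)\,c(\Omega^1(d))$; multiplying,
\[
c(\mathcal P^1)=(1+(d-1)h)^{n+1}.
\]
Consequently the Segre classes of $\mathcal K$, defined by $c(\mathcal K)s(\mathcal K)=1$ and here equal to $s(\mathcal K)=(1+(d-1)h)^{n+1}$, are $s_k(\mathcal K)=\binom{n+1}{k}(d-1)^k h^k$, i.e. the elementary symmetric functions $e_k$ of $n+1$ variables each equal to $(d-1)h$. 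This single fact is what forces the overall factor $(d-1)^n$ and the factorials in the statement. The case $m=n$ is already visible: there $\mathcal S=\mathcal K$, $Z_n=\mathbb P^n$ and $\sigma_1=c_1(\mathcal K^\vee)=(n+1)(d-1)h$, so $\deg X_n^*=\int_{\mathbb P^n}\big((n+1)(d-1)h\big)^n=(n+1)^n(d-1)^n$, and the prefactor $\tfrac{1}{n!(n+1)^n}\deg X_n^*$ in the theorem is just a repackaging of $(d-1)^n/n!$.

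The remaining step is the Gysin pushforward $\pi_*\sigma_1^{\,D}$. By Pieri's rule $\sigma_1^{\,D}=\sum_{\mu}f^{\mu}\,s_\mu(\mathcal S^\vee)$, the sum over partitions $\mu$ with $|\mu|=D$ and at most $N-m$ parts. The pushforward formula for Grassmann bundles reads $\pi_*\,s_\mu(\mathcal S^\vee)=\det\big(s_{\mu_i-(m-n)+j-i}(\mathcal K)\big)_{1\le i,j\le N-m}$, which vanishes unless $\mu\supseteq\varepsilon=\big((m-n)^{N-m}\big)$; writing $\mu=\lambda+\varepsilon$ with $|\lambda|=n$ it becomes $\det\big(s_{\lambda_i+j-i}(\mathcal K)\big)_{1\le i,j\le N-m}$. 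Since the $s_k(\mathcal K)$ are the $e_k$ above, the dual Jacobi--Trudi identity turns this determinant into the Schur polynomial $s_{\lambda'}$ in $n+1$ copies of $(d-1)h$, namely $(d-1)^n s_{\lambda'}(1^{n+1})\,h^n$. Integrating over $\mathbb P^n$ and summing,
\[
\deg X_m^*=(d-1)^n\sum_{|\lambda|=n}f^{\lambda+\varepsilon}\,s_{\lambda'}(1^{n+1}).
\]
Finally the hook--content formula gives $s_{\lambda'}(1^{n+1})=\prod_{x\in\lambda}\frac{n+1-c(x)}{\ell(x)}$, with $c(x),\ell(x)$ the content and hook length of the box $x$ of $\lambda$; grouping contents by rows gives $\prod_{x\in\lambda}\big(n+1-c(x)\big)=\prod_{i=1}^{N-m}\frac{(n+i)!}{(n+i-\lambda_i)!}$, while $\prod_{x\in\lambda}\ell(x)=n!/f^\lambda$, so that $s_{\lambda'}(1^{n+1})=\frac{f^\lambda}{n!}\prod_{i=1}^{N-m}\frac{(n+i)!}{(n+i-\lambda_i)!}$. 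This yields the asserted formula, and specializing $m=N-1$ recovers Boole's $(n+1)(d-1)^n$.

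I expect the main obstacle to be the geometric input rather than the bookkeeping. Once the bundle structure and the identity $c(\mathcal P^1)=(1+(d-1)h)^{n+1}$ are in hand, the intersection number is essentially forced by Schubert calculus, whereas establishing that $\gamma_m$ is generically finite and of degree one \emph{uniformly} in the range $n\le m\le N-1$ --- interpolating between the ordinary Gauss map and the conormal map --- requires a genuine analysis of the fibres, i.e. of which points of $X$ a general tangent $m$-plane touches.
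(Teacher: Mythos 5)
Your intersection-theoretic core is correct and is essentially the paper's own computation: your presentation of $Z_m$ as the bundle of rank-$(N-m)$ subbundles of $\mathcal K=\ker(V\otimes\mathcal O_X\to\mathcal P)=\mathcal N^\vee$ is Lemma \ref{lemma:tautological_line_bundle_small_grassmann}, your identity $s(\mathcal K)=(1+(d-1)h)^{n+1}$ is \eqref{equation:Segre_classes_of_C_dual}, your Pieri-plus-Gysin pushforward is exactly the Kaji--Terasoma formula (Fact \ref{fact:degree_formula_for_grassmann_bundles}) as deployed in Theorem \ref{theorem:degree_gamma_m}\eqref{equation:degree_X_m_C_dual}, and your evaluation of $\det\bigl(s_{\lambda_i+j-i}(\mathcal K)\bigr)$ via dual Jacobi--Trudi and the hook-content formula is a legitimate (and slightly slicker) alternative to the column-operation argument of Proposition \ref{proposition:Schur_of_C_dual}; both give $\varDelta_\lambda(s(\mathcal N))=\frac{(d-1)^{n}}{n!}\,f^\lambda h^{n}\prod_{i=1}^{N-m}\frac{(n+i)!}{(n+i-\lambda_i)!}$ for $\vert\lambda\vert=n$, hence the asserted formula.

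The genuine gap is the geometric input, which you defer and never supply: (i) generic finiteness of $\gamma_m$, which you propose to prove by injectivity of the differential ``using the nondegeneracy of the second fundamental form,'' and (ii) $\deg\gamma_m=1$ in characteristic $0$, i.e.\ that a general tangent $m$-plane touches $X$ at a single point. Carrying these out uniformly for $n\le m\le N-1$ is hard, and you yourself flag it as the main obstacle; but neither requires any fibre analysis if you reverse the logical order, which is precisely what the paper does. The number $d_m^*:=\int_{Z_m}c_1(\gamma_m^*\mathcal O_{\mathbb G(m+1,V)}(1))^{\dim Z_m}$ (your $\int_{Z_m}\sigma_1^{\,D}$) is defined on $Z_m$ whether or not $\gamma_m$ is generically finite, and your Schubert calculus computes it unconditionally; since it is the self-intersection of the pullback of the very ample Pl\"ucker class, the projection formula gives $d_m^*=\deg\gamma_m\cdot\deg X_m^*$ with the convention $\deg\gamma_m=0$ when $\gamma_m$ drops dimension, so $d_m^*>0$ if and only if $\gamma_m$ is generically finite (Lemma \ref{lemma:deg_X_m}). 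Your closed formula for $d_m^*$ is a sum of manifestly positive terms, so (i) is free, in any characteristic --- this is how the paper gets the first assertion of the theorem. As for (ii), do not try to prove from scratch that a general tangent $m$-plane is tangent at only one point: that is a theorem of Zak \cite[Theorem 5]{zak1987}, valid for any non-singular variety in characteristic $0$ once $\gamma_m$ is generically finite, and the paper simply cites it inside Lemma \ref{lemma:deg_X_m}. With these two repairs your argument closes up and coincides with the paper's proof.
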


Note that 
the number of standard Young tableaux above is 
known to be 
equal to the dimension of the corresponding irreducible representation of 
the symmetric group, and is 
computed easily from the Young diagram by virtue of 
the Hook Length Formula 
by Frame, Robinson and Thrall 
(\cite[p.53, Exercise 9]{fulton-young};  \eqref{equation:hook_lengths}).

The assumption $p=0$ 
above is essential, 
while 
the equality above holds for $m=n$ in arbitrary characteristic 
except for the case $p = 2$ and $n=1$ 
(Proposition \ref{proposition:ordinary_gauss_map}
\eqref{proposition:ordinary_gauss_map_Veronese}).

As an application of Theorem \ref{theorem:main_theorem}, 
we obtain bounds for the degree of $X_{m}^{*}$, 
as follows:

\begin{corollary}[Cf.~Heier-Takayama {\cite[Theorem 1.1]{heier-takayama}}]
\label{corollary:degree_approximation}
For the Veronese variety $X=v_d(\mathbb P^n)
\subseteq \mathbb P^{N}$ with 
$N+1 = \binom{n+d}{d}$ in characteristic $p=0$ 
and for each $m$ with $n \le m \le N-1$, 
we have 
\begin{equation*}
\tbinom{N-m}{n}/\tbinom{N-n}{n}
\le
\deg X_{m}^* 
\left/ 
\left[ 
\tbinom{n+ \dim G}{n}\deg G
\deg X_{n}^* 
\right]\right. 
\le 
\tbinom{N-m+n-1}{n}/\tbinom{N-1}{n}
,
\end{equation*}
where $G:=\mathbb G(m-n,N-n)$ is 
the Grassmann variety parametrizing 
$(m-n)$-quotient spaces of an $(N-n)$-dimensional vector space, 
and $\tbinom{k}{n}:= 0$ if $k<n$.
\end{corollary}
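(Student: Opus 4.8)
The plan is to collapse the middle quantity into a single convex combination of an explicit product over the cells of a Young diagram, and then to bound that convex combination by its two extreme values, which turn out to be exactly the asserted bounds.

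First I would fix the abbreviations $r := N-m$ and $s := m-n$ and record two elementary facts about $G=\mathbb{G}(m-n,N-n)$: its dimension is $\dim G=(m-n)(N-m)=rs$, and its Pl\"ucker degree is $\deg G=f^{\varepsilon}$, where $\varepsilon=(s^{r})$ is the $r\times s$ rectangle. The latter holds because $G$ also parametrizes the $(N-m)$-dimensional kernels inside an $(N-n)$-dimensional space, and the Pl\"ucker degree of a Grassmannian equals the number of standard Young tableaux of the associated rectangle. Writing $Q$ for the ratio to be estimated and inserting Theorem~\ref{theorem:main_theorem}, the factor $\deg X_{n}^{*}$ cancels and $\binom{n+rs}{n}=(rs+n)!/(n!\,(rs)!)$ is exactly the factor that will absorb $f^{\lambda+\varepsilon}/f^{\varepsilon}$.

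The crucial computation is to pass to \emph{content form}. Using the hook length formula $f^{\mu}=|\mu|!/\prod_{c}h(c)$, the hook--content formula $s_{\mu}(1^{M})=\prod_{(i,j)}(M+j-i)/h(i,j)$, and the Vandermonde expression $f^{\mu}=|\mu|!\prod_{i<j}(\ell_i-\ell_j)/\prod_i\ell_i!$ with $\ell_i=\mu_i+r-i$ to evaluate $f^{\lambda+\varepsilon}/f^{\varepsilon}$ (the differences $\ell_i-\ell_j$ being independent of $s$), I expect $Q$ to simplify to
\[
Q=\frac{1}{(n+1)^{n}\,n!}\sum_{\lambda\vdash n}(f^{\lambda})^{2}\prod_{(i,j)\in\lambda}\frac{(r+c)(n+1-c)}{r+s+c},\qquad c:=j-i .
\]
This passage --- the bookkeeping of hooks, arms and contents for the rectangle $\varepsilon$ --- is where I expect essentially all of the technical effort to lie; once it is in place the rest is soft.

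The elegant finish reads the displayed sum as an average. Set
\[
\Psi_{\lambda}:=\prod_{(i,j)\in\lambda}\frac{r+c}{r+s+c}=\frac{s_{\lambda}(1^{r})}{s_{\lambda}(1^{r+s})},\qquad
w_{\lambda}:=\frac{(f^{\lambda})^{2}\prod_{(i,j)}(n+1-c)}{(n+1)^{n}\,n!}=\frac{f^{\lambda}\,s_{\lambda'}(1^{n+1})}{(n+1)^{n}},
\]
so that $Q=\sum_{\lambda}w_{\lambda}\Psi_{\lambda}$. The $w_{\lambda}$ are nonnegative and sum to $1$, because $\sum_{\mu\vdash n}f^{\mu}s_{\mu}(1^{n+1})=(n+1)^{n}$ is the expansion $p_{1}^{\,n}=\sum_{\mu}f^{\mu}s_{\mu}$ specialized at $n+1$ variables equal to $1$ (after reindexing by conjugation, using $f^{\lambda}=f^{\lambda'}$). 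Hence $Q$ is a convex combination of the numbers $\Psi_{\lambda}$, and it suffices to prove $\Psi_{1^{n}}\le\Psi_{\lambda}\le\Psi_{(n)}$ for every $\lambda\vdash n$. Since $\phi(c):=(r+c)/(r+s+c)$ is strictly increasing in $c$, this is a box-moving argument: sliding a removable corner of a lower row to the end of the first row replaces one content by a strictly larger one, so $\Psi$ increases and the single row $(n)$ is the maximizer; symmetrically, moving corners to the foot of the first column replaces a content by a strictly smaller one, so the single column $1^{n}$ is the minimizer (with $\Psi_{\lambda}=0$ exactly when $\ell(\lambda)>r$, harmlessly matching the convention $\binom{k}{n}=0$ for $k<n$). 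Finally
\[
\Psi_{(n)}=\frac{r(r+1)\cdots(r+n-1)}{(r+s)(r+s+1)\cdots(r+s+n-1)}=\frac{\binom{r+n-1}{n}}{\binom{r+s+n-1}{n}},\qquad
\Psi_{1^{n}}=\frac{r(r-1)\cdots(r-n+1)}{(r+s)(r+s-1)\cdots(r+s-n+1)}=\frac{\binom{r}{n}}{\binom{r+s}{n}},
\]
which, via $r=N-m$ and $r+s=N-n$, are precisely the upper and lower bounds of the statement.
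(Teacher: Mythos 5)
Your proposal is correct and follows essentially the same route as the paper: your $\Psi_\lambda$ is exactly the paper's quantity $D(\lambda)=\prod_{i}\prod_{1\le l\le\lambda_i}\tfrac{N-m+l-i}{N-n+l-i}$, your normalization $\sum_\lambda w_\lambda=1$ is the paper's Corollary \ref{corollary:f_lambda_formula} (which you reprove independently via $p_1^n=\sum_\mu f^\mu s_\mu$ specialized at $1^{n+1}$), and your box-moving step is precisely Lemma \ref{lemma:D_lambda}. The remaining differences are cosmetic---hook-content bookkeeping and Schur specializations in place of the paper's factorial manipulations of \eqref{equation:hook_lengths}---so the two arguments coincide in substance.
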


We will show that the equalities 
in the above hold 
for arbitrary 
non-degenerate, non-singular projective curves $X$ in $\mathbb P^N$ 
(Corollary \ref{corollary:degree_formula_in_dim1}).

\smallskip

The article is organized as follows: 
In \S1 we show basic results: 
in particular, we show that 
$Z_m$ is isomorphic to Grassmann bundles 
$\mathbb G_{X}(N-m,\mathcal N) 
\simeq 
\mathbb G_{X}(m-n,\mathcal N^{\vee})$
over $X$,  
and study the pull-back of 
the line bundle $\gamma_{m}^{*}\mathcal O_{\mathbb G(m+1,V)}(1)$ on $Z_{m}$
to those Grassmann bundles 
(Lemma \ref{lemma:tautological_line_bundle_small_grassmann}). 
In particular, it turns out that 
$Z_m$ is a Grassmann bundle over the image of 
the ordinary Gauss map $\gamma_{n}$ 
(Remark \ref{remark:ordinary_Gauss_map}). 
In \S2 we show a degree formula of  $X_m^*$ 
for Veronese curves $X=v_d(\mathbb P^1) \subseteq \mathbb P^d$ 
(Theorem \ref{theorem:degree_formula_Veronese_curve}), 
and for projective varieties $X$ in general,  
in terms of the Schur polynomials 
$\varDelta_{\lambda}(s(\mathcal N))$ 
in the Segre classes of $\mathcal N$ 
(Theorem \ref{theorem:degree_gamma_m}). 
The latter result is a generalization of  
the formula of Katz-Kleiman 
(\cite[\S5]{katz}, \cite[IV, \S D]{kleiman1977}; 
Corollary \ref{corollary:degree_gamma_N-1_general}), 
and is used in the proof of the main result. 
Then, 
we study the Schur polynomials $\varDelta_{\lambda}(s(\mathcal N))$ 
for Veronese varieties 
(Proposition \ref{proposition:Schur_of_C_dual}),  
and 
show 
Theorem \ref{theorem:main_theorem} 
and Corollary \ref{corollary:degree_approximation}. 
In addition, we give 
a combinatorial identity on $f^{\lambda}$
(Corollary \ref{corollary:f_lambda_formula}),  
another form of the degree formula in the main result 
(Theorem \ref{theorem:main_theorem'}), 
and 
a conjecture related to 
Corollary \ref{corollary:degree_approximation} 
(Conjecture \ref{conjecture:degree_approximation}). 
Finally, we give explicit formulae of $\deg X_m^*$ 
for Veronese varieties of low dimensions 
(Theorems 
\ref{theorem:degree_formula_Veronese_surface}, 
\ref{theorem:degree_formula_Veronese_3fold}).

\section{Set-up}

Let $X$ be a projective variety of dimension $n$ in $\mathbb P^{N}$
defined over an algebraically closed field $k$ of arbitrary characteristic $p$. 
Throughout this article, we assume that 
$X$ is \textit{non-degenerate} and \textit{non-singular} with $X \ne \mathbb P^N$.  
We have the following commutative diagram, 
with the same notation as in Introduction:
\begin{figure}[ht]
$$
\xymatrix@C=24pt@R=48pt{%
& 
Z_m 
\ar@{^{}->>}[r] 
\ar@{^{}->>}[dr] ^(.4){\gamma_m} 
\ar@{^{(}->}[d] _{} 
\ar@{^{}->}[dl] _{\pi_m} 
& 
\gamma_m(Z_m)=: X_m^* 
\ar@{^{(}->}[d] _{} 
\\ 
X & 
 X \times  \mathbb G(m+1, V) 
\ar@{^{}-{>}}[r] _{} 
\ar@{^{}-{>}}[l] _{} 
& 
\mathbb G(m+1, V) 
\ar@{^{(}->}[r] _{} 
& 
\mathbb P(\wedge^{m+1} V)
, 
}
$$
\end{figure}

\noindent 
where 
$Z_m := \coprod_{x \in X} \{  (x, W) \vert T_xX \subseteq W  \}$
and $\pi_m$ is the projection.  
Set 
$$
d_m^* 
:=
\int _{Z_m} c_1(\gamma_m^* \mathcal O_{\mathbb G(m+1,V)}(1)) ^{\dim Z_m} , 
$$
which is non-negative since $\mathcal O_{\mathbb G(m+1,V)}(1)$ is very ample, 
where $\int_X \cdots$ denotes the degree of $\cdots$ as a zero-cycle.

\begin{lemma}\label{lemma:deg_X_m}
We have 
\begin{equation} \label{equation:deg_X_m}
d_m^* 
=\begin{cases}
\deg \gamma_m \cdot \deg X_{m}^{*} , & 
\textsl{if $\gamma_{m}$ is generically finite}
, \\ 
0 , & \textsl{otherwise} ,  
\end{cases} 
\end{equation}
and 
if $d_{m}^{*}>0$ and $p=0$, 
then 
$\deg X_{m}^{*} = d_{m}^{*}$. 
\end{lemma}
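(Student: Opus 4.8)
The plan is to unwind the definition of $d_m^*$ and relate it to the geometry of the morphism $\gamma_m : Z_m \to X_m^*$ together with the very ample line bundle $\mathcal O_{\mathbb G(m+1,V)}(1)$ restricted along the Plücker embedding. Write $L := \mathcal O_{\mathbb G(m+1,V)}(1)$ and let $r := \dim Z_m$. The starting observation is that $\gamma_m^* L = \gamma_m^*\bigl(\iota^* \mathcal O_{\mathbb P(\wedge^{m+1}V)}(1)\bigr)$, so that $c_1(\gamma_m^* L)$ is the pullback of the hyperplane class, and hence $d_m^* = \int_{Z_m} c_1(\gamma_m^* L)^r$ is a top self-intersection of a pullback divisor class.

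First I would dispose of the non-generically-finite case. If $\gamma_m$ is not generically finite, then $\dim X_m^* < \dim Z_m = r$, so the image $\gamma_m(Z_m)$ has dimension strictly less than $r$. The class $c_1(\gamma_m^* L)$ is pulled back from $X_m^*$, and intersecting $r$ copies of a class pulled back from a variety of dimension $< r$ forces the product to vanish: concretely, by the projection formula $\int_{Z_m} c_1(\gamma_m^* L)^r = \int_{X_m^*} {\gamma_m}_* \bigl(c_1(\gamma_m^* L)^r\bigr)$, and since $c_1(\gamma_m^* L)^r = \gamma_m^*\bigl(c_1(L|_{X_m^*})^r\bigr)$ with $c_1(L|_{X_m^*})^r$ a class in degree $r > \dim X_m^*$, the cycle $c_1(L|_{X_m^*})^r$ is zero on $X_m^*$. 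This gives $d_m^* = 0$, the second line of \eqref{equation:deg_X_m}.

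Next I would handle the generically finite case. Here $\dim X_m^* = r$, so $\gamma_m : Z_m \to X_m^*$ is a generically finite surjective morphism of degree $\deg \gamma_m$. Applying the projection formula again, $c_1(\gamma_m^* L)^r = \gamma_m^*\bigl(c_1(L|_{X_m^*})^r\bigr)$ and ${\gamma_m}_* \gamma_m^* \alpha = (\deg \gamma_m)\,\alpha$ for any class $\alpha$ on $X_m^*$, so that
\begin{equation*}
d_m^* = \int_{Z_m} \gamma_m^*\bigl(c_1(L|_{X_m^*})^r\bigr) = \deg\gamma_m \int_{X_m^*} c_1(L|_{X_m^*})^r = \deg\gamma_m \cdot \deg X_m^*,
\end{equation*}
where the last equality is the definition of the degree of the projective variety $X_m^* \subseteq \mathbb P(\wedge^{m+1}V)$ with respect to the hyperplane bundle $L|_{X_m^*}$. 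This establishes the first line of \eqref{equation:deg_X_m}.

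Finally, for the last assertion, suppose $d_m^* > 0$. By the case analysis just completed, $d_m^* > 0$ forces $\gamma_m$ to be generically finite, so $d_m^* = \deg\gamma_m \cdot \deg X_m^*$ with $\deg X_m^* > 0$, hence $\deg\gamma_m \ge 1$. When $p = 0$, the classical generic smoothness theorem applies to the generically finite dominant morphism $\gamma_m : Z_m \to X_m^*$ between varieties over a field of characteristic zero, so $\gamma_m$ is separable and $\deg\gamma_m$ equals the degree of the generically étale extension of function fields; combined with the fact that $\gamma_m$ is birational onto its image in this setting, one concludes $\deg\gamma_m = 1$ and therefore $\deg X_m^* = d_m^*$. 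The main obstacle, and the one place where $p = 0$ is genuinely used, is precisely this last step: ruling out inseparability of $\gamma_m$, which in positive characteristic could make $\deg\gamma_m > 1$ and break the equality. In characteristic zero generic smoothness gives this for free, but I would want to confirm that $\gamma_m$ is in fact birational onto $X_m^*$ rather than merely separable of some positive degree — this is the substantive geometric input and likely follows from the structure of $Z_m$ as a Grassmann bundle developed earlier in the excerpt.
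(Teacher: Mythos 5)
Your derivation of \eqref{equation:deg_X_m} is correct and essentially coincides with the paper's own argument: both cases come down to the projection formula, with the convention that $\deg\gamma_m=0$ when $\gamma_m$ is not generically finite (the paper cites \cite[I, \S2, Proposition 6]{kleiman1966} for exactly this bookkeeping), so this part needs no further comment.

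The gap is in the final assertion. You correctly isolate what must be shown --- that $\deg\gamma_m=1$ when $p=0$ --- but your proposed argument does not establish it. Generic smoothness only yields that $\gamma_m$ is generically \'etale, i.e.\ separable; a separable generically finite morphism can have any degree, so separability by itself gives nothing. Your phrase ``combined with the fact that $\gamma_m$ is birational onto its image in this setting'' is circular, since birationality is precisely the statement in question; and your fallback guess, that it follows from the structure of $Z_m$ as a Grassmann bundle, does not work: Lemma \ref{lemma:tautological_line_bundle_small_grassmann} exhibits $Z_m$ as a Grassmann bundle over $X$ via $\pi_m$, which controls the fibres of $\pi_m$ and says nothing about the fibres of $\gamma_m$. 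The paper closes this step by invoking a theorem of Zak \cite[Theorem 5]{zak1987} on the structure of Gauss mappings, which asserts that in characteristic zero a generically finite (higher) Gauss map of a non-singular variety has degree one. This is a substantive geometric theorem --- the higher-Gauss-map analogue of the classical birationality of the Gauss map in characteristic zero --- and not a formal consequence of generic smoothness, so without citing or proving it your argument for the last assertion is incomplete.
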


\begin{proof}
We have 
$$
d_m^* =\deg \gamma_m \cdot 
\int _{X_m^*} c_1(\mathcal O_{\mathbb G(m+1,V)}(1)\vert_{X_m^*})^{\dim Z_m}
, 
$$ 
where 
$\deg \gamma_m$ 
is by definition 
equal to the degree of the extension of function fields $K(Z_{m})/K(X_{m})$ 
if 
$\gamma_{m}$ is generically finite, 
and equal to zero otherwise 
(\cite[I, \S2, Proposition 6]{kleiman1966}). 
Therefore, 
$d_{m}^{*} > 0$ if and only if $\gamma_{m}$ is generically finite, 
and \eqref{equation:deg_X_m} follows: Indeed, in that case we have  
$
\int _{X_m^*} c_1(\mathcal O_{\mathbb G(m+1,V)}(1)\vert_{X_m^*})^{\dim Z_m} =\deg X_{m}^{*}
.
$

The latter assertion follows from \eqref{equation:deg_X_m}: 
Indeed, if $p=0$ and $\gamma_{m}$ is generically finite, 
then $\deg \gamma_{m} = 1$ by a theorem of Zak 
\cite[Theorem 5]{zak1987}. 
\end{proof}

\begin{remark}
It might be natural to ask if 
a general fibre of $\gamma_m$ is linear also in $p>0$
under the assumption that $\gamma_m$ is separable.  
For the case $m = N-1$, 
this is so by 
the Monge-Segre-Wallace criterion 
\cite[(4.4)]{kleiman1984}: 
Indeed, if $\gamma_{N-1}$ is separable, then 
$X$ is reflexive, so that 
$Z_{N-1} \to X_{N-1}^*$ is 
a projective space bundle over the non-singular locus of $X_{N-1}^*$. 
For the case $m=n$, 
the answer is affirmative: this is a theorem of K. Furukawa 
\cite[Theorem 1.1]{furukawa}. 
\end{remark}

Denote 
by $\mathcal P$ 
the bundle
$P_X^1(\mathcal O_X(1))$
 of principal parts of 
$\mathcal O_X(1) := \mathcal O_{\mathbb P^N}(1)\vert _X$ of first order on $X$ (\cite{piene}), 
and by $\mathcal N$ 
the 
normal bundle 
$N_{X/\mathbb P^N}$ 
of $X$ in $\mathbb P^N$ twisted by $\mathcal O_X(-1)$: 
$$
\mathcal P := P_X^1(\mathcal O_X(1)) 
,
\qquad 
\mathcal N := N_{X/\mathbb P^N} \otimes \mathcal O_X(-1)  . 
$$ 
In fact, 
since $X$ is non-singular, 
these are 
both locally free as sheaves on $X$ with rank $n+1$ and $N-n$, 
respectively,   
We have a commutative diagram 
of vector bundles on $X$ 
with exact rows and columns, 
as follows: 
\begin{equation}\label{diagram:principal_bundle}
\begin{matrix}
& & 0 & & 0\\[\kaigyouhaba] 
& & \downarrow& & \downarrow & & \\[\kaigyouhaba] 
& & \mathcal I_{X/\mathbb P^N} / \mathcal I_{X/\mathbb P^N}^2 \otimes \mathcal O_X(1) 
& = & \mathcal N^\vee  \\[\kaigyouhaba] 
& & \downarrow& & \downarrow & & \\[\kaigyouhaba]  
0 & \to & \Omega_{\mathbb P^N}^1
\vert _X
\otimes \mathcal O_X(1) & \to & V \otimes \mathcal O_X & \to &   \mathcal O_X(1) & \to & 0 \\[\kaigyouhaba] 
& & \downarrow & & \downarrow & & \vert \hskip -1pt \vert  \\[\kaigyouhaba] 
0 & \to & \Omega_X^1\otimes \mathcal O_X(1) & \to &  \mathcal P & \to &  \mathcal O_X(1) & \to  & 0\\[\kaigyouhaba] 
& & \downarrow& & \downarrow & & \\[\kaigyouhaba] 
& & 0 & & 0 
\end{matrix} 
\end{equation}

\begin{remark}\label{remark:ordinary_Gauss_map}
The ordinary Gauss map 
$\gamma_n$ 
is a morphism  
from $X \simeq Z_n$ 
to the Grassmann variety $\mathbb G(n+1, V)$ 
induced from 
the quotient 
$V\otimes \mathcal O_X \to \mathcal P$ above 
by the universality of $\mathbb G(n+1, V)$: 
In fact, 
$\mathbb P(\mathcal P\otimes k(x)) = T_{x}X$ 
in $\mathbb P(V) = \mathbb P^{N}$ for any $x\in X$.  
Hence
$V\otimes \mathcal O_X \to \mathcal P$ 
is equal to the pull-back 
by $\gamma_n$
of the universal quotient on 
$\mathbb G(n+1, V)$ (\cite[IV, \S A]{kleiman1977}).
\end{remark}

\begin{proposition}\label{proposition:ordinary_gauss_map}
\begin{enumerate}
\item \label{proposition:ordinary_gauss_map_general}
If $p=0$, or more generally if $\gamma_{n}$ is separable, 
then 
$$\deg X_{n}^{*}= \int_{X} (K_{X}+(n+1)H)^{n} , $$
where $K_{X}$ is the canonical divisor class and 
$H$ is the class of hyperplane sections of $X$, that is, 
$H := c_{1}(\mathcal O_{X}(1))$. 
\item \label{proposition:ordinary_gauss_map_Veronese}
For the Veronese variety $X=v_{d}(\mathbb P^{n})$, 
if $p\ne 2$ or $n \ge 2$, then 
$$
\deg X_{n}^{*} = (n+1)^{n}(d-1)^{n} 
,
$$
and if $p=2$ and $n=1$, then $\deg X_1^* = d-1$. 
\end{enumerate}
\end{proposition}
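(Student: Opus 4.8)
The plan is to reduce everything to the intersection number $d_n^\ast$ of Lemma~\ref{lemma:deg_X_m} and then control the degree of $\gamma_n$ separately in the separable and inseparable ranges. For part~(\ref{proposition:ordinary_gauss_map_general}) I would first identify the pulled-back Pl\"ucker bundle: by Remark~\ref{remark:ordinary_Gauss_map} the quotient $V\otimes\mathcal O_X\to\mathcal P$ is the pull-back under $\gamma_n$ of the universal quotient on $\mathbb G(n+1,V)$, so $\gamma_n^\ast\mathcal O_{\mathbb G(n+1,V)}(1)=\det\mathcal P$, and since $Z_n\simeq X$ we get $d_n^\ast=\int_X c_1(\mathcal P)^n$. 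The class $c_1(\mathcal P)$ is read off from the lower exact row of \eqref{diagram:principal_bundle}: as $\rk\Omega_X^1=n$, one has $c_1(\mathcal P)=c_1(\Omega_X^1\otimes\mathcal O_X(1))+H=(K_X+nH)+H=K_X+(n+1)H$, so $d_n^\ast=\int_X(K_X+(n+1)H)^n$ in every characteristic. To pass from $d_n^\ast$ to $\deg X_n^\ast$: when $\gamma_n$ is separable its general fibre is linear (the case $m=n$ of Furukawa's theorem quoted in the Remark following the Monge--Segre--Wallace discussion), and since the Gauss map of a smooth variety is finite, that fibre is a single reduced point; hence $\deg\gamma_n=1$ and \eqref{equation:deg_X_m} gives $\deg X_n^\ast=d_n^\ast$. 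For $p=0$ this is exactly Lemma~\ref{lemma:deg_X_m}.

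For the value in part~(\ref{proposition:ordinary_gauss_map_Veronese}) I would specialise this to $X=v_d(\mathbb P^n)$. Here $\mathcal O_X(1)=\mathcal O_{\mathbb P^n}(d)$, so $H=d\,h$ with $h=c_1(\mathcal O_{\mathbb P^n}(1))$, while $K_X=K_{\mathbb P^n}=-(n+1)h$; therefore $K_X+(n+1)H=(n+1)(d-1)h$ and $\int_{\mathbb P^n}\bigl[(n+1)(d-1)h\bigr]^n=(n+1)^n(d-1)^n$. This is the value of $d_n^\ast$, and it is valid in all characteristics; what remains is to decide the degree of $\gamma_n$.

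The decisive point, and the main obstacle, is the separability of $\gamma_n$. Since the Veronese is homogeneous under $\mathrm{PGL}$, it suffices to examine the differential of $\gamma_n$ at one point, using the affine description $t=(t_1,\dots,t_n)\mapsto(t^\alpha)_{1\le|\alpha|\le d}$ (all non-constant monomials of degree $\le d$). The differential of a Gauss map is its second fundamental form, and at the origin $II_0(\partial_i,\partial_j)=\partial_i\partial_j t^\alpha|_0$ picks out the degree-two coordinate directions $n_\alpha$: one finds $II_0(\partial_i,\partial_j)=n_{ij}$ for $i\ne j$ but $II_0(\partial_i,\partial_i)=2\,n_{ii}$. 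Separability is equivalent to injectivity of $v\mapsto II_0(v,-)$. If $p\ne 2$ the diagonal term survives, so whenever $v_k\ne0$ the element $II_0(v,\partial_k)$ has nonzero $n_{kk}$-component; if $p=2$ but $n\ge2$ one uses an off-diagonal partner $j\ne k$, so that $II_0(v,\partial_j)$ has nonzero $n_{kj}$-component. In both ranges $d\gamma_n$ is injective, so $\gamma_n$ is an immersion, hence separable and generically finite, and part~(\ref{proposition:ordinary_gauss_map_general}) yields $\deg X_n^\ast=d_n^\ast=(n+1)^n(d-1)^n$.

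The single exceptional case $p=2$, $n=1$ is exactly where the above degenerates ($II_0\equiv0$ since the only term is the diagonal one), and I would handle it by an explicit computation that also pins down the inseparable degree. For the rational normal curve $[1:t]\mapsto[1:t:\cdots:t^d]$ the tangent line is the row space of $\bigl(\begin{smallmatrix}1&t&\cdots&t^d\\0&1&\cdots&d\,t^{d-1}\end{smallmatrix}\bigr)$, whose Pl\"ucker coordinates are $p_{ij}=(j-i)\,t^{\,i+j-1}$ for $0\le i<j\le d$. In characteristic $2$ we have $p_{ij}\ne0$ precisely when $j-i$ is odd, i.e.\ when the exponent $i+j-1$ is even; thus $\gamma_1$ factors through the Frobenius $t\mapsto t^2$ followed by a map with coprime exponents $0,1,\dots,d-1$, which is birational onto its image, so $\deg\gamma_1=2$. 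Since $d_1^\ast=\int_X(K_X+2H)=(-2)+2d=2(d-1)$ by part~(\ref{proposition:ordinary_gauss_map_general}), equation \eqref{equation:deg_X_m} gives $\deg X_1^\ast=d_1^\ast/\deg\gamma_1=d-1$. The delicate part throughout is this characteristic-two bookkeeping: checking that the second fundamental form degenerates in no case other than $p=2$, $n=1$, and verifying there that the inseparable degree is exactly $2$.
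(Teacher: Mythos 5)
Your proof is correct, and it splits naturally into a part that matches the paper and a part that genuinely diverges. Part (1) is essentially the paper's own argument: identify $\gamma_n^*\mathcal O_{\mathbb G(n+1,V)}(1)=\det\mathcal P$ via Remark \ref{remark:ordinary_Gauss_map}, read off $c_1(\mathcal P)=K_X+(n+1)H$ from \eqref{diagram:principal_bundle}, and pass from $d_n^*$ to $\deg X_n^*$ using Zak's finiteness theorem together with Lemma \ref{lemma:deg_X_m} (for $p=0$) or Furukawa's theorem (for $\gamma_n$ separable); your spelled-out step ``linear fibre plus finite implies a single point, plus separable implies $\deg\gamma_n=1$'' is exactly what the paper's citation of \cite{furukawa} is doing. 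In part (2), however, you take a different route: for $p\ne 2$ or $n\ge 2$ the paper simply cites \cite{kajiRGMVV} and \cite{kajiSGMR} for the stronger fact that the Gauss map of a Veronese variety is an \emph{embedding} (which gives $\deg\gamma_n=1$ directly, without Furukawa), whereas you prove only separability, by an explicit second-fundamental-form computation at one point (legitimate by homogeneity, and your characteristic bookkeeping --- the factor $2$ on the diagonal terms $II_0(\partial_i,\partial_i)=2\,n_{ii}$, rescued by off-diagonal terms when $p=2$, $n\ge 2$ --- is correct), and then feed separability into part (1). For $p=2$, $n=1$ the paper merely asserts that ``an explicit calculation shows'' $X_1^*$ is a rational normal curve of degree $d-1$ with $K(Z_1)/K(X_1^*)$ purely inseparable of degree $2$; your Pl\"ucker computation $p_{ij}=(j-i)t^{i+j-1}$, the observation that in characteristic $2$ exactly the even exponents $0,2,\dots,2d-2$ survive, and the resulting factorization through Frobenius \emph{is} that calculation, carried out in full, and it correctly yields $\deg X_1^*=d_1^*/2=d-1$. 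The trade-off: the paper's route is shorter and yields more geometric information (an embedding), but leans on an unpublished reference; yours is self-contained and elementary, at the modest cost of routing the main case through Furukawa's theorem, which part (1) already requires.
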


\begin{proof}
(1) 
We have that 
$\gamma_{n}^{*} \mathcal O_{\mathbb G(n+1,V)}(1) = \det \mathcal P$ 
with $X\simeq Z_{n}$ (Remark \ref{remark:ordinary_Gauss_map})
and 
\begin{equation}\label{equation:c_1_of_principal_bundle} 
c_{1}(\det \mathcal P)
=K_{X}+(n+1)H
\end{equation}
by \eqref{diagram:principal_bundle}, 
hence $d_{n}^{*} = \int_{X} (K_{X}+(n+1)H)^{n} .$
According to a theorem of Zak 
\cite[Theorem 5]{zak1987}, 
$\gamma_{n}$ is finite since $X$ is non-singular. 
Thus the equality,  
$d_{n}^{*} = \deg X_{n}^{*}$
follows from 
Lemma \ref{lemma:deg_X_m} if  $p = 0$, 
or 
from \eqref{equation:deg_X_m} and 
a theorem of Furukawa \cite[Theorem 1.1]{furukawa} if $\gamma_{n}$ is separable.

\noindent 
(2) 
For $X = v_{d}(\mathbb P^{n})$ we have 
$K_{X}+(n+1)H = (-n-1)h + (n+1)dh
=(n+1)(d-1)h$, 
where $h := c_{1}(\mathcal O_{\mathbb P^{n}}(1))$.
If $p\ne 2$ or $n \ge 2$, then 
the Gauss map $\gamma_{n}$ for a Veronese variety
is an embedding (\cite{kajiRGMVV}, \cite{kajiSGMR}), 
hence the conclusion follows.  
On the other hand, 
an explicit calculation shows that if $p=2$ and $n=1$, then $X_1^*$ is 
a non-singular rational curve of degree $d-1$ 
in $\mathbb P(\wedge ^2 V)\simeq \mathbb P^{\binom{d+1}{2}-1}$: 
In fact, 
$X_1^*$ is equal to 
the Veronese curve of degree $d-1$ in 
the linear subspace spanned by itself $X_1^*$ 
with respect to suitable homogeneous coordinates,  
and 
the extension $K(Z_{1})/K(X_{1}^{*})$ defined by 
$\gamma_1$ is purely inseparable of degree $2$. 
\end{proof}

For a vector bundle $\mathcal E$ of rank $r$ on $X$ and an integer $d$ with 
$0 < d < r$, 
let $\mathbb G_X(d,\mathcal E)$ be 
the Grassmann bundle of $\mathcal E$ over $X$ 
parametrizing corank $d$ subbundles of $\mathcal E$. 
Denote simply by $\mathbb G(d,r)$ the Grassmann variety $\mathbb G_{\Spec k}(d,k^{\oplus r})$
when 
$X = \Spec k$. 
Let $\pi: \mathbb G_X(d,\mathcal E) \to X$ be the projection, 
let 
$ \mathcal Q \gets \pi^*\mathcal E$ 
be the universal quotient bundle of rank $d$ on $\mathbb G_X(d,\mathcal E)$, 
and denote by 
$\mathcal O_{\mathbb G_X(d,\mathcal E)}(1)$ 
the pull-back of the tautological line bundle 
$\mathcal O_{\mathbb P_X(\wedge^d \mathcal E)}(1)$ 
by the relative 
Pl\"ucker embedding, 
$\mathbb G_X(d,\mathcal E)
\hookrightarrow \mathbb P_X(\wedge^d \mathcal E)$ over $X$.  
Then we have $\mathcal O_{\mathbb G_X(d,\mathcal E)}(1) = \det \mathcal Q$, 
since the embedding above is defined by the quotient
$\wedge^d \pi^*  \mathcal E \to \wedge^d \mathcal Q=\det \mathcal Q$.

\begin{fact}[{\cite{kaji-terasoma}}]\label{fact:degree_formula_for_grassmann_bundles}
Let 
$\pi_{*} : A^{*+ (r-d)d}(\mathbb G_X(d,\mathcal E)) \to A^{*}(X)$ 
be 
the push-forward between the Chow rings 
induced by $\pi$. 
Then for each integer $k \ge (r-d)d$, we have 
$$
\pi_{*}( 
c_1(\mathcal O_{\mathbb G_X(d,\mathcal E)}(1))^{k}
)
= 
\sum_{ \vert \lambda \vert = k - (r-d)d} 
f^{\lambda+\varepsilon }
\varDelta _{\lambda}(s(\mathcal E))   
, 
$$
where 
$\lambda =(\lambda_1 , \dots, \lambda_d)$
is a partition 
with $\vert \lambda \vert = \sum \lambda_i$, 
$\varepsilon := ((r-d)^d)$ is $d$ copies of the integer $r-d$, 
$f^{\lambda+\varepsilon}$
denotes the number of standard Young tableaux with shape 
${\lambda+\varepsilon}$, 
and 
$\varDelta_{\lambda}(s(\mathcal E))
:= \det[s_{\lambda_{i}+j-i}(\mathcal E)]_{1 \le i,j \le d}$ 
is the Schur polynomial in the Segre classes of $\mathcal E$ for $\lambda$. 
\end{fact}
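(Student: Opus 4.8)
The plan is to compute the Gysin push-forward $\pi_*\bigl(c_1(\mathcal{O}_{\mathbb{G}_X(d,\mathcal{E})}(1))^{k}\bigr)$ by separating it into two logically independent ingredients: a universal identity in symmetric functions, which supplies the numbers $f^{\lambda+\varepsilon}$, and the Gysin (integration) formula for Grassmann bundles, which supplies the Schur polynomials $\varDelta_\lambda(s(\mathcal{E}))$. Since $\mathcal{O}_{\mathbb{G}_X(d,\mathcal{E})}(1)=\det\mathcal{Q}$, we have $c_1(\mathcal{O}(1))=c_1(\mathcal{Q})=\varDelta_{(1)}(\mathcal{Q})$, the relative special Schubert divisor, so the whole problem is to evaluate $\pi_*\bigl(\varDelta_{(1)}(\mathcal{Q})^{k}\bigr)$.

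First I would expand the power $\varDelta_{(1)}(\mathcal{Q})^{k}$ into the Schur polynomials $\varDelta_\mu(\mathcal{Q})$. Multiplication by the divisor class $\varDelta_{(1)}(\mathcal{Q})$ is governed by Pieri's rule, which adds a single box; iterating $k$ times from the empty partition, the coefficient of $\varDelta_\mu(\mathcal{Q})$ is the number of saturated chains from $\emptyset$ to $\mu$ in Young's lattice, namely the number $f^\mu$ of standard Young tableaux of shape $\mu$. This is precisely the classical expansion of the power sum $p_1^{\,k}$ into Schur functions, transported to $A^*(\mathbb{G}_X(d,\mathcal{E}))$ through the Chern classes of $\mathcal{Q}$; partitions with more than $d$ parts drop out because $\rk\mathcal{Q}=d$. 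Hence $\varDelta_{(1)}(\mathcal{Q})^{k}=\sum_{|\mu|=k,\ \ell(\mu)\le d}f^\mu\,\varDelta_\mu(\mathcal{Q})$, an exact identity of classes.

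Next I would apply $\pi_*$ termwise using the Gysin formula for Grassmann bundles, $\pi_*\varDelta_\mu(\mathcal{Q})=\varDelta_{\mu-\varepsilon}(s(\mathcal{E}))$, where $\mu-\varepsilon:=(\mu_1-(r-d),\dots,\mu_d-(r-d))$ and the right-hand side is taken to be zero unless $\mu\supseteq\varepsilon$. Granting this, only the partitions of the form $\mu=\lambda+\varepsilon$ survive; since $|\mu|=|\lambda|+(r-d)d=k$, the surviving indices are exactly the partitions $\lambda$ with $|\lambda|=k-(r-d)d$, with coefficient $f^\mu=f^{\lambda+\varepsilon}$. Collecting terms gives $\pi_*\bigl(c_1(\mathcal{O}(1))^{k}\bigr)=\sum_{|\lambda|=k-(r-d)d}f^{\lambda+\varepsilon}\varDelta_\lambda(s(\mathcal{E}))$, the asserted formula. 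As a check, the extremal case $k=(r-d)d$ forces $\lambda=\emptyset$ and recovers $\pi_*\bigl(c_1(\mathcal{O}(1))^{(r-d)d}\bigr)=f^{\varepsilon}$, the Pl\"ucker degree of the fibre $\mathbb{G}(d,r)$.

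The real content, and the step I expect to be the main obstacle, is the Gysin formula $\pi_*\varDelta_\mu(\mathcal{Q})=\varDelta_{\mu-\varepsilon}(s(\mathcal{E}))$ itself; everything else is bookkeeping. I would derive it by factoring $\pi$ through the complete flag bundle of quotients $\pi^*\mathcal{E}\twoheadrightarrow\mathcal{Q}_d\twoheadrightarrow\cdots\twoheadrightarrow\mathcal{Q}_1$, an iterated projective bundle over $X$ on which $\mathcal{Q}$ splits into line bundles with Chern roots $x_1,\dots,x_d$. Pushing down one projective-bundle stage at a time via the elementary Segre identity $\pi'_*(\xi^{\,m-1+a})=s_a$ on the $\mathbb{P}$-bundle of a rank-$m$ bundle, and reassembling the result through the Jacobi--Trudi determinant defining $\varDelta_\mu$, yields $\varDelta_{\mu-\varepsilon}(s(\mathcal{E}))$ after the total $(r-d)d$-fold drop in relative dimension. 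The delicate points are keeping the Segre-class conventions consistent across the tower (quotient bundles, normalized so that $\pi'_*(\xi^{\,m-1})=1$) and checking that the determinantal reassembly lands on $\varDelta_{\mu-\varepsilon}$ rather than a reindexed variant; these verifications are what is carried out in \cite{kaji-terasoma}.
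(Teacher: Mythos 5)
There is nothing in the paper to compare your argument against: this statement is labelled a \emph{Fact} and is imported verbatim from \cite{kaji-terasoma}; the article reproduces no proof of it. Judged on its own terms, your proof is correct, and it is the natural two-step argument. Step one, $c_1(\mathcal Q)^{k}=\sum_{\vert\mu\vert=k,\,\ell(\mu)\le d}f^{\mu}\varDelta_{\mu}(\mathcal Q)$, is the symmetric-function identity $p_1^{k}=\sum_{\mu}f^{\mu}s_{\mu}$ evaluated at the Chern roots of $\mathcal Q$; as you say, this is a universal polynomial identity in the Chern classes of $\mathcal Q$, so no relations of $A^{*}(\mathbb G_X(d,\mathcal E))$ are needed, and partitions with more than $d$ rows drop out. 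Step two, $\pi_{*}\varDelta_{\mu}(\mathcal Q)=\varDelta_{\mu-\varepsilon}(s(\mathcal E))$, is the classical Gysin formula for Schur classes of the universal quotient (J\'ozefiak--Lascoux--Pragacz, Pragacz), and your Segre convention matches the paper's ($s(\mathcal E)c(\mathcal E^{\vee})=1$, i.e.\ $s_{j}(\mathcal E)$ is the $j$-th complete symmetric function of the Chern roots). Two refinements are worth recording. First, the vanishing for $\mu\not\supseteq\varepsilon$ need not be imposed as a convention: $\mu-\varepsilon$ is automatically weakly decreasing, and if $\mu_{d}<r-d$ then every entry of the bottom row of $\det[s_{\mu_{i}-(r-d)+j-i}(\mathcal E)]_{1\le i,j\le d}$ has negative index, so the determinant vanishes identically; thus a single formula covers all $\mu$. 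Second, deferring the ``delicate verifications'' of the Gysin formula to \cite{kaji-terasoma} is circular in spirit, since that is exactly the result being proved; the circularity is harmless, however, because the formula is independently standard, and your flag-bundle sketch does close up: iterating $\pi'_{*}(\xi^{\,m-1+a})=s_{a}$ down the tower of quotient $\mathbb P$-bundles, with $s(\ker(\pi^{*}\mathcal E\to\mathcal Q_{i}))=s(\mathcal E)\prod_{j\le i}(1-x_{j})$ from Whitney, gives $\tau_{*}(x_1^{a_1}\cdots x_d^{a_d})=\det[s_{a_i-r+j}(\mathcal E)]_{1\le i,j\le d}$ for the full flag bundle $\tau$ over $X$; substituting $a_i=\mu_i+d-i$, and applying the same monomial formula to $\mathcal Q$ over $\mathbb G_X(d,\mathcal E)$ to identify $\rho_{*}(x_1^{\mu_1+d-1}\cdots x_d^{\mu_d})=\varDelta_{\mu}(\mathcal Q)$, yields exactly $\varDelta_{\mu-\varepsilon}(s(\mathcal E))$.
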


In particular, 
the degree of the Grassmann variety $\mathbb G(d, r)$ 
is equal to $f^{\varepsilon}$ 
with respect to the Pl\"ucker embedding,  
as it is classically well known ({\cite[Example 14.7.11 (iii)]{fulton}}). 
Thus we have 
\begin{equation}\label{equation:degree_of_Grassmann_variety}
\deg \mathbb G(d, r) 
= f^{\varepsilon} 
, \quad 
\dim \mathbb G(d, r) 
=\vert \varepsilon \vert 
\quad 
(\varepsilon := ((r-d)^d))
. 
\end{equation}
Note that 
the number of standard Young tableaux above is 
given explicitly, as follows
(\cite[p.54, Exercise 9]{fulton-young}): 
\begin{equation}\label{equation:hook_lengths}
f^{\lambda} 
= \frac
{\vert \lambda \vert !\prod_{1 \le i < j \le d} (\lambda_i -\lambda_j+j-i)}
{\prod_{1 \le i \le d} (\lambda_i+d-i)!}
.
\end{equation}

The Segre classes $s_i(\mathcal E)$ here are
the ones satisfying
$s(\mathcal E)c(\mathcal E^\vee)=1$ 
as in \cite{kleiman1977}, 
where 
$s(\mathcal E)$ and $c(\mathcal E)$ are respectively 
the total Segre class and the total Chern class of $\mathcal E$. 
Note that our Segre class $s_i(\mathcal E)$ 
differs by the sign $(-1)^i$ 
from the one in \cite{fulton}.

\begin{lemma}\label{lemma:tautological_line_bundle_twisted}
\begin{enumerate}
\item \label{lemma:tautological_line_bundle_twisted_1}
If $\mathcal L$ is a line bundle on $X$, then 
there is an isomorphism, 
$ i : \mathbb G_X(d,\mathcal E) \to \mathbb G_X(d,\mathcal E \otimes \mathcal L) $
over $X$ such that 
\begin{equation*}
i^*\mathcal O_{\mathbb G_X(d,\mathcal E\otimes \mathcal L)} (1) 
= 
 \mathcal O_{\mathbb G_X(d,\mathcal E)} (1) 
\otimes \pi^* \mathcal L^{\otimes d} . 
\end{equation*}
\item \label{lemma:tautological_line_bundle_twisted_2}
In particular, 
if $\mathcal E=\mathcal L^{\oplus r}$, 
then there is an isomorphism, 
$
i : 
\mathbb G(d, r) \times X 
\to 
\mathbb G_X(d,\mathcal L^{\oplus r})
$
such that 
$$
i^* \mathcal O_{\mathbb G_X(d,\mathcal L^{\oplus r})} (1) 
=p_{1}^*  \mathcal O_{\mathbb G(d, r)}(1) \otimes p_{2}^* \mathcal L^{\otimes d} , 
$$
where 
$p_{1}: \mathbb G(d, r) \times X \to \mathbb G(d, r)$
and $p_{2}: \mathbb G(d, r) \times X \to X$
are the projections. 
\end{enumerate}
\end{lemma}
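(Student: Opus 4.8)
The plan is to obtain the isomorphism $i$ of part (\ref{lemma:tautological_line_bundle_twisted_1}) from an isomorphism of the functors represented by the two Grassmann bundles, and then to compute $i^{*}\mathcal O(1)$ by tracking the universal quotient. Recall that $\mathbb G_X(d,\mathcal E)$ represents the functor sending a morphism $f : T \to X$ to the set of rank-$d$ locally free quotients of $f^{*}\mathcal E$. Since $\mathcal L$ is invertible, tensoring by $f^{*}\mathcal L$ gives a bijection
$$
\{\, f^{*}\mathcal E \twoheadrightarrow Q \,\}
\;\xrightarrow{\;\sim\;}\;
\{\, f^{*}(\mathcal E\otimes \mathcal L) \twoheadrightarrow Q\otimes f^{*}\mathcal L \,\}
$$
between rank-$d$ quotients of $f^{*}\mathcal E$ and of $f^{*}(\mathcal E\otimes\mathcal L)$, with inverse given by tensoring by $f^{*}\mathcal L^{\vee}$. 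This bijection is natural in $T$ and compatible with the structure maps to $X$, so by Yoneda it is induced by a unique isomorphism $i : \mathbb G_X(d,\mathcal E) \to \mathbb G_X(d,\mathcal E\otimes \mathcal L)$ over $X$.

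Next I would identify the pull-back of the universal quotient. Applying the bijection above to the universal quotient $\pi^{*}\mathcal E \twoheadrightarrow \mathcal Q$ on $\mathbb G_X(d,\mathcal E)$ shows, by the very construction of $i$, that $i^{*}\mathcal Q' = \mathcal Q \otimes \pi^{*}\mathcal L$, where $\mathcal Q'$ denotes the universal quotient on the target. Since $\mathcal Q$ has rank $d$, the determinant-of-a-twist formula gives
$$
i^{*}\det \mathcal Q'
= \det(\mathcal Q\otimes \pi^{*}\mathcal L)
= \det \mathcal Q \otimes \pi^{*}\mathcal L^{\otimes d}.
$$
As $\mathcal O_{\mathbb G_X(d,\mathcal E)}(1)=\det\mathcal Q$ and $\mathcal O_{\mathbb G_X(d,\mathcal E\otimes\mathcal L)}(1)=\det\mathcal Q'$, this is exactly the asserted identity in part (\ref{lemma:tautological_line_bundle_twisted_1}).

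For part (\ref{lemma:tautological_line_bundle_twisted_2}) I would write $\mathcal L^{\oplus r}=\mathcal O_X^{\oplus r}\otimes \mathcal L$ and apply part (\ref{lemma:tautological_line_bundle_twisted_1}) with $\mathcal E=\mathcal O_X^{\oplus r}$, obtaining an isomorphism $i$ with
$$
i^{*}\mathcal O_{\mathbb G_X(d,\mathcal L^{\oplus r})}(1)
= \mathcal O_{\mathbb G_X(d,\mathcal O_X^{\oplus r})}(1)\otimes \pi^{*}\mathcal L^{\otimes d}.
$$
It then remains to identify $\mathbb G_X(d,\mathcal O_X^{\oplus r})$ with $\mathbb G(d,r)\times X$. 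Since $\mathcal O_X^{\oplus r}$ is the pull-back of $k^{\oplus r}$ along the structure morphism $X \to \Spec k$ and the formation of Grassmann bundles commutes with base change, there is a canonical identification $\mathbb G_X(d,\mathcal O_X^{\oplus r}) = \mathbb G(d,r)\times X$ under which the universal quotient is $p_{1}^{*}\mathcal Q_{0}$, with $\mathcal Q_{0}$ the universal quotient on $\mathbb G(d,r)$; hence $\mathcal O_{\mathbb G_X(d,\mathcal O_X^{\oplus r})}(1)=p_{1}^{*}\mathcal O_{\mathbb G(d,r)}(1)$ and $\pi=p_{2}$. Substituting into the displayed formula yields $i^{*}\mathcal O_{\mathbb G_X(d,\mathcal L^{\oplus r})}(1)=p_{1}^{*}\mathcal O_{\mathbb G(d,r)}(1)\otimes p_{2}^{*}\mathcal L^{\otimes d}$, as desired.

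The argument is essentially formal, so I do not anticipate a genuine obstacle; the only points requiring care are verifying that the natural bijection of quotients upgrades to a morphism of schemes over $X$ (handled by the functor-of-points/Yoneda step) and that the determinant identity is applied with the correct rank $d=\rk\mathcal Q$, which is precisely what produces the exponent $d$ on $\mathcal L$.
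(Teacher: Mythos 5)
Your proposal is correct and follows essentially the same route as the paper: the paper also constructs $i$ from the universal property of $\mathbb G_X(d,\mathcal E\otimes\mathcal L)$ applied to the twisted universal quotient $[\pi^*\mathcal E\to\mathcal Q]\otimes\pi^*\mathcal L$ (your Yoneda step is just this universality argument, with the explicit inverse $i'$ replaced by the observation that the natural transformation of functors is invertible), and then computes $i^*\det\mathcal Q'=\det(\mathcal Q\otimes\pi^*\mathcal L)=\det\mathcal Q\otimes\pi^*\mathcal L^{\otimes d}$ exactly as you do. Part (2) is likewise identical: apply (1) with $\mathcal E=\mathcal O_X^{\oplus r}$ and use the canonical identification $\mathbb G_X(d,\mathcal O_X^{\oplus r})=\mathbb G(d,r)\times X$.
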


\begin{proof}
(1) 
Let 
$\mathcal Q \gets \pi^*\mathcal E$ and 
$\mathcal Q' \gets \pi'{}^*(\mathcal E\otimes \mathcal L)$ 
be respectively 
the universal quotient bundles 
of 
$\mathbb G_X(d, \mathcal E)$ and of 
$\mathbb G_X(d, \mathcal E \otimes \mathcal L)$, 
where $\pi' : \mathbb G_X(d, \mathcal E \otimes \mathcal L)  \to X$ is the projection. 
By the universality of $  \mathbb G_X(d, \mathcal E \otimes \mathcal L)$ 
the quotient $[\pi^*\mathcal E \to \mathcal Q]\otimes \pi^{*} \mathcal L$ on $\mathbb G_X(d, \mathcal E)$  
induces a morphism 
$i : \mathbb G_X(d,\mathcal E) \to  \mathbb G_X(d, \mathcal E \otimes \mathcal L)$
such that 
$[\pi^*\mathcal E \to \mathcal Q]\otimes \pi^{*} \mathcal L 
=i^{*}[\pi'{}^*(\mathcal E\otimes \mathcal L) \to \mathcal Q']$. 
Then we have 
$$
i^{*}\mathcal O_{\mathbb G_X(d, \mathcal E \otimes \mathcal L)}(1) 
=i^{*} \det \mathcal Q'
=\det (\mathcal Q \otimes \pi^{*}\mathcal L)
=\det \mathcal Q \otimes \pi^{*}\mathcal L^{\otimes d} 
=\mathcal O_{\mathbb G_X(d,\mathcal E)}(1)
 \otimes \pi^{*}\mathcal L^{\otimes d} 
. 
$$
Similarly to the above, 
the quotient 
$[\pi'{}^*(\mathcal E\otimes \mathcal L) \to 
\mathcal Q'] \otimes \pi'{}^{*}\mathcal L^{\vee}$ 
on $\mathbb G_X(d, \mathcal E \otimes \mathcal L)$ 
induces a morphism 
$i' : \mathbb G_X(d, \mathcal E \otimes \mathcal L) \to \mathbb G_X(d,\mathcal E)$ 
such that 
$[\pi'{}^*(\mathcal E\otimes \mathcal L) \to \mathcal Q'] \otimes \pi'{}^{*}\mathcal L^{\vee}
= i'{}^{*}[\pi^*\mathcal E \to \mathcal Q]$. 
Then it follows 
from the universality of those Grassmann bundles 
that $i\circ i' = 1_{\mathbb G_X(d, \mathcal E \otimes \mathcal L)}$ 
and $i'\circ i = 1_{\mathbb G_X(d,\mathcal E)}$. 

\noindent (2) 
Apply the result (1) above to the case $\mathcal E = \mathcal O_X^{\oplus r}$ with 
$\mathbb G_X(d , \mathcal O_X^{\oplus r})
= \mathbb G(d,r) \times X$. 
\end{proof}

\begin{lemma}\label{lemma:tautological_line_bundle_small_grassmann}
\begin{enumerate}
\item \label{lemma:tautological_line_bundle_small_grassmann_C}
There is an isomorphism, 
$j : Z_m \to \mathbb G_X(m-n, \mathcal N^\vee)$
over $X$ 
such that 
\begin{equation*}
j_{*}\gamma_m ^*\mathcal O_{\mathbb G(m+1,V)}(1) 
= \mathcal O_{\mathbb G_X(m-n, \mathcal N^\vee)}(1) 
\otimes \pi_{m}^* \det \mathcal P , 
\end{equation*}
where the projection $\mathbb G_X(m-n, \mathcal N^\vee)\to X$ is denoted also by 
$\pi_{m}$. 
\item \label{lemma:tautological_line_bundle_small_grassmann_C_dual}
Let 
$j' : Z_m \to \mathbb G_X(N-m, \mathcal N )$ be 
the isomorphism 
over $X$ 
defined by the composition of $j$ and the canonical 
isomorphism $\mathbb G_X(m-n, \mathcal N^\vee) \simeq \mathbb G_X(N-m, \mathcal N)$. 
Then we have 
\begin{equation*}
j'_{*}\gamma_m ^*\mathcal O_{\mathbb G(m+1,V)}(1) = 
\mathcal O_{\mathbb G_X(N-m, \mathcal N )}(1) .
\end{equation*}
\end{enumerate}
In particular, 
 $\dim Z_{m} = n+(N-m)(m-n)$.  
\end{lemma}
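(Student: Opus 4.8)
The plan is to identify $Z_m$ with the Grassmann bundle $\mathbb{G}_X(m-n,\mathcal{N}^\vee)$ over $X$ by globalizing the evident fibrewise linear algebra, the bridge being the universal quotient $V\otimes\mathcal{O}_X\to\mathcal{P}$ that defines the Gauss map (Remark \ref{remark:ordinary_Gauss_map}) together with the exact sequence $0\to\mathcal{N}^\vee\to V\otimes\mathcal{O}_X\to\mathcal{P}\to 0$ read off from \eqref{diagram:principal_bundle}. Over a point $x$ the embedded tangent space is $T_xX=\mathbb{P}(\mathcal{P}\otimes k(x))$, cut out from $\mathbb{P}(V)$ by the forms in $\mathcal{N}^\vee\otimes k(x)\subseteq V$, so an $m$-plane $W\supseteq T_xX$ is recorded faithfully by its $(N-m)$-dimensional space of vanishing linear forms, which lies inside $\mathcal{N}^\vee\otimes k(x)$; this gives the fibrewise bijection between the fibre of $Z_m$ and the corank $m-n$ subspaces of $\mathcal{N}^\vee\otimes k(x)$.

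For part (1) I would upgrade this to a morphism of schemes as follows. On $Z_m$ the incidence condition yields a factorization $V\otimes\mathcal{O}_{Z_m}\twoheadrightarrow\mathcal{Q}_m\twoheadrightarrow\pi_m^*\mathcal{P}$, where $\mathcal{Q}_m:=\gamma_m^*(\text{universal quotient})$ and the composite is $\pi_m^*(V\otimes\mathcal{O}_X\to\mathcal{P})$. The kernel $\mathcal{K}:=\ker(\mathcal{Q}_m\to\pi_m^*\mathcal{P})$ is then a rank $m-n$ quotient of $\pi_m^*\mathcal{N}^\vee$, which by the universal property defines $j:Z_m\to\mathbb{G}_X(m-n,\mathcal{N}^\vee)$ over $X$; running the construction backwards from the universal subbundle $\mathcal{S}\subseteq\pi^*\mathcal{N}^\vee$ of rank $N-m$, by forming the $(m+1)$-quotient $(V\otimes\mathcal{O})/\mathcal{S}$, produces the inverse. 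The line-bundle identity is then formal: $\gamma_m^*\mathcal{O}(1)=\det\mathcal{Q}_m=\det\mathcal{K}\otimes\pi_m^*\det\mathcal{P}$ from the sequence defining $\mathcal{K}$, and $\det\mathcal{K}=j^*\mathcal{O}_{\mathbb{G}_X(m-n,\mathcal{N}^\vee)}(1)$ since the tautological bundle is the determinant of the universal quotient.

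For part (2) I would invoke the standard duality $c:\mathbb{G}_X(m-n,\mathcal{N}^\vee)\xrightarrow{\sim}\mathbb{G}_X(N-m,\mathcal{N})$ sending a quotient to the dual of its kernel. Writing $0\to\mathcal{S}\to\pi^*\mathcal{N}^\vee\to\mathcal{Q}\to 0$ for the tautological sequence on the source, the target's universal quotient pulls back to $\mathcal{S}^\vee$, whence $c^*\mathcal{O}_{\mathbb{G}_X(N-m,\mathcal{N})}(1)=(\det\mathcal{S})^{-1}=\mathcal{O}_{\mathbb{G}_X(m-n,\mathcal{N}^\vee)}(1)\otimes\pi^*\det\mathcal{N}$, using $\det\mathcal{S}\otimes\det\mathcal{Q}=\pi^*\det\mathcal{N}^\vee$. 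Transporting the formula of (1) along $c$ leaves the twist $\pi_m^*(\det\mathcal{P}\otimes(\det\mathcal{N})^{-1})$, which is trivial because $\det\mathcal{P}=\det\mathcal{N}$ drops out of $0\to\mathcal{N}^\vee\to V\otimes\mathcal{O}_X\to\mathcal{P}\to 0$ upon taking determinants (the middle term being free); hence $j'_*\gamma_m^*\mathcal{O}(1)=\mathcal{O}_{\mathbb{G}_X(N-m,\mathcal{N})}(1)$.

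The dimension statement then falls out at once: by either isomorphism $Z_m$ is a Grassmann bundle over $X$ with fibre $\mathbb{G}(N-m,N-n)$, of dimension $(N-m)(m-n)$ by \eqref{equation:degree_of_Grassmann_variety}, so $\dim Z_m=n+(N-m)(m-n)$. The one genuinely delicate point is in part (1): establishing the factorization $V\otimes\mathcal{O}_{Z_m}\to\mathcal{Q}_m\to\pi_m^*\mathcal{P}$ scheme-theoretically over all of $Z_m$ (not merely on the open incidence locus whose closure defines $Z_m$), and checking that the map built from $\mathcal{S}$ really lands in the closed subscheme $Z_m$ rather than in $X\times\mathbb{G}(m+1,V)$; once the two universal properties are matched, the determinant bookkeeping and the duality twist are entirely routine.
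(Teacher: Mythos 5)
Your proposal is correct and follows essentially the same route as the paper: you build $j$ from the factorization $V\otimes\mathcal{O}_{Z_m}\twoheadrightarrow\mathcal{Q}_m\twoheadrightarrow\pi_m^*\mathcal{P}$ and the induced rank-$(m-n)$ quotient $\pi_m^*\mathcal{N}^\vee\to\mathcal{K}$, construct the inverse from the universal subbundle exactly as the paper does, and your inline computation of the duality twist $c^*\mathcal{O}_{\mathbb{G}_X(N-m,\mathcal{N})}(1)=\mathcal{O}_{\mathbb{G}_X(m-n,\mathcal{N}^\vee)}(1)\otimes\pi^*\det\mathcal{N}$ together with $\det\mathcal{P}=\det\mathcal{N}$ is precisely the paper's Lemma \ref{lemma:tautological_line_bundles_via_canonical_isomorphism} combined with diagram \eqref{diagram:principal_bundle}. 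The ``delicate point'' you flag is harmless here, since under the standing assumption that $X$ is non-singular, $Z_m$ is the (reduced) incidence locus itself, with no closure involved.
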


\begin{proof} (1)
For a scheme $Y$ over $X$, consider 
a commutative diagram of vector bundles 
on $Y$ with 
exact rows and columns, as follows:  
\begin{equation}\label{diagram:vector_bundles_K_W_E}
\begin{matrix}
& & 0 & & 0\\[\kaigyouhaba]
& & \downarrow& & \downarrow & & \\[\kaigyouhaba]
& & \mathcal E & = & \mathcal E \\[\kaigyouhaba]
& & \psi_{}' \downarrow \phantom{\psi_{}'} & & \varphi_{}' \downarrow \phantom{\varphi_{}'} & & \\[\kaigyouhaba]
0 & \to &  \mathcal N^\vee_{Y} & \to & (V \otimes \mathcal O_{X})_{Y} & \to & 
\mathcal P_{Y} & \to & 0 \\[\kaigyouhaba]
& & \psi_{} \downarrow \phantom{\psi_{}} & & \varphi_{} \downarrow \phantom{\varphi_{}} & & \vert \hskip -1pt \vert  \\[\kaigyouhaba]
0 & \to & \mathcal K & \to & \mathcal W & \to &  \mathcal P_{Y}   & \to  & 0\\
& & \downarrow& & \downarrow & & \\[\kaigyouhaba]
& & 0 & & 0 
\end{matrix}
\end{equation}
where
the exact sequence in the middle row is 
the pull-back to $Y$ 
of the one in 
the middle column of \eqref{diagram:principal_bundle} 
(the subscript $Y$ denotes the pull-back to $Y$), 
and 
$\mathcal W$, $\mathcal K$ and $\mathcal E$ 
are of rank $m+1$, $m-n$ and $N-m$, respectively. 
Let 
$\mathcal Q \gets V \otimes \mathcal O_{\mathbb G(m+1, V)}$
be the universal quotient bundle on $\mathbb G(m+1, V)$.

We consider the case when $Y:= Z_{m}$. 
By the definition of $Z_{m}$, 
the quotient, $[V \otimes \mathcal O_{X} \to \mathcal P]_{Z_{m}}$
factors through $\mathcal Q_{Z_{m}}$ 
(Remark \ref{remark:ordinary_Gauss_map}).
Now, set  
$\mathcal W:= \mathcal Q_{Z_{m}}$, 
$\varphi := [V \otimes \mathcal O_{\mathbb G(m+1, V)} \to \mathcal Q]_{Z_{m}}$, 
$\mathcal E:= \ker \varphi$
and 
$\mathcal K:= \ker [\mathcal Q_{Z_{m}} \to \mathcal P_{Z_{m}}]$. 
Let $\psi : \mathcal N_{Z_{m}}^{\vee} \to \mathcal K$ 
be 
the induced homomorphism 
that fits into the diagram \eqref{diagram:vector_bundles_K_W_E}. 
Then the quotient $\psi$ induces a morphism,  
$$
j : Z_{m} \to \mathbb G_{X}(m-n,\mathcal N^{\vee})
$$
such that $\psi$ is equal to the pull-back  
of the universal quotient on $\mathbb G_{X}(m-n,\mathcal N^{\vee})$ 
to $Z_{m}$. 
In particular, we have  
$$
j^{*} \mathcal O_{\mathbb G_{X}(m-n,\mathcal N^{\vee})}(1) = \det \mathcal K 
= \det \mathcal Q_{Z_{m}} \otimes \det \mathcal P_{Z_{m}} ^{\vee}
= \mathcal O_{\mathbb G(m+1, V)}(1)_{Z_{m}} \otimes \det \mathcal P_{Z_{m}} ^{\vee}
.
$$

To show that $j$ is an isomorphism, 
we consider the case when 
$Y:= \mathbb G_{X}(m-n,\mathcal N^{\vee})$, 
$\psi'$ and $\psi$ are respectively 
the universal subbundle and the universal quotient bundle of 
$\mathbb G_{X}(m-n,\mathcal N^{\vee})$. 
Define $\varphi'$ to be the composition of $\psi'$ and 
$[\mathcal N^{\vee} \to V \otimes \mathcal O_{X}]_{\mathbb G_{X}(m-n,\mathcal N^{\vee})}$, 
set $\mathcal W:= \coker \varphi'$, and let 
$\varphi$ 
be the canonical quotient.  
Then the morphism 
$\mathbb G_{X}(m-n,\mathcal N^{\vee})\to X \times \mathbb G(m+1, V)$ 
induced by $\varphi$ 
gives the inverse of $j$.

\noindent (2) 
This follows from
(1) and 
 Lemma \ref{lemma:tautological_line_bundles_via_canonical_isomorphism} below, 
since $\det \mathcal N = \det \mathcal P$ by \eqref{diagram:principal_bundle},  
while one can prove (2) directly by a similar way to (1). 
\end{proof}

\begin{lemma}\label{lemma:tautological_line_bundles_via_canonical_isomorphism}
For a vector bundle $\mathcal E$ of rank $r$ on a variety $X$, 
we have an isomorphism, 
$$
\mathcal O_{\mathbb G_X(d,\mathcal E)}(1) 
\simeq 
\mathcal O_{\mathbb G_X(r-d,\mathcal E^\vee)}(1) \otimes \pi^*\det \mathcal E, 
$$
via the canonical isomorphism, 
$\mathbb G_X(d,\mathcal E)\simeq \mathbb G_X(r-d,\mathcal E^\vee)$.  
\end{lemma}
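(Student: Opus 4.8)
The plan is to argue entirely at the level of the universal exact sequence and to track determinant line bundles under dualization. Write $\pi : \mathbb G_X(d,\mathcal E) \to X$ for the projection and let
$$
0 \to \mathcal S \to \pi^*\mathcal E \to \mathcal Q \to 0
$$
be the universal exact sequence, where $\mathcal Q$ is the universal quotient bundle of rank $d$, so that $\mathcal O_{\mathbb G_X(d,\mathcal E)}(1) = \det \mathcal Q$, and $\mathcal S$ is the universal subbundle of rank $r-d$.

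First I would make the canonical isomorphism explicit. Dualizing the universal sequence gives
$$
0 \to \mathcal Q^\vee \to \pi^*\mathcal E^\vee \to \mathcal S^\vee \to 0 ,
$$
which exhibits $\mathcal S^\vee$ as a rank-$(r-d)$ quotient of $\pi^*\mathcal E^\vee$. By the universality of $\mathbb G_X(r-d,\mathcal E^\vee)$ this quotient induces a morphism $\mathbb G_X(d,\mathcal E) \to \mathbb G_X(r-d,\mathcal E^\vee)$; running the symmetric construction starting from $\mathbb G_X(r-d,\mathcal E^\vee)$ produces a morphism in the opposite direction, and the two are mutually inverse by the universal property, exactly as in the proof of Lemma \ref{lemma:tautological_line_bundle_twisted}. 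Under this isomorphism the universal quotient bundle of $\mathbb G_X(r-d,\mathcal E^\vee)$ pulls back to $\mathcal S^\vee$, so that
$$
\mathcal O_{\mathbb G_X(r-d,\mathcal E^\vee)}(1) = \det \mathcal S^\vee = (\det \mathcal S)^\vee .
$$

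It then remains only to compare determinants. Taking top exterior powers in the universal sequence yields $\pi^*\det \mathcal E = \det \mathcal S \otimes \det \mathcal Q$, whence $\det \mathcal Q = \pi^*\det \mathcal E \otimes (\det \mathcal S)^\vee$. Combining this with the previous identity gives
$$
\mathcal O_{\mathbb G_X(d,\mathcal E)}(1) = \det \mathcal Q = \pi^*\det \mathcal E \otimes (\det \mathcal S)^\vee = \mathcal O_{\mathbb G_X(r-d,\mathcal E^\vee)}(1) \otimes \pi^*\det \mathcal E ,
$$
which is the asserted isomorphism.

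I do not expect a serious obstacle, since the content is purely formal. The one point requiring care is the identification of the canonical isomorphism itself: one must check that the rank-$(r-d)$ quotient $\pi^*\mathcal E^\vee \to \mathcal S^\vee$ is genuinely the one classified by the universal property, and that it is compatible with the corank-$d$ subbundle convention fixed earlier, so that the universal quotient on the dual side pulls back to $\mathcal S^\vee$ itself rather than to a twist of it. Once the conventions are pinned down this is immediate, and the determinant computation above is then routine.
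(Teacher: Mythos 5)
Your proof is correct and follows essentially the same route as the paper's: identify $\mathcal O_{\mathbb G_X(d,\mathcal E)}(1) = \det \mathcal Q$ and $\mathcal O_{\mathbb G_X(r-d,\mathcal E^\vee)}(1) = \det \mathcal S^\vee$ under the canonical isomorphism, then conclude from $\det \mathcal S \otimes \det \mathcal Q \simeq \pi^*\det \mathcal E$. The only difference is that you spell out the construction of the canonical isomorphism (via dualizing the universal sequence and invoking the universal property), which the paper leaves implicit.
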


\begin{proof}
Let $\mathcal S \rightarrow \pi^{*}\mathcal E$ be 
the universal subbundle of $\mathbb G_X(d,\mathcal E)$, 
and 
let $\mathcal Q \leftarrow \pi^{*}\mathcal E$ be 
the universal quotient bundle of $\mathbb G_X(d,\mathcal E)$, as before. 
Since 
$\mathcal O_{\mathbb G_X(d,\mathcal E)}(1) = \det \mathcal Q$ and
$\mathcal O_{\mathbb G_X(r-d,\mathcal E^\vee)}(1) = \det \mathcal S^{\vee}$, 
the conclusion follows from the isomorphism, 
$\det \mathcal S \otimes \det \mathcal Q \simeq \pi^*\det \mathcal E$. 
\end{proof}

\section{Degree Formulae}
\subsection{Veronese curves}

\begin{theorem}\label{theorem:degree_formula_Veronese_curve}
For the Veronese curve $X = v_d(\mathbb P^1) \subseteq \mathbb P^d$ of degree $d$, 
the $m$-th Gauss map $\gamma_m$ is a finite morphism
 and 
$\dim X_m^* = 1+ (d-m)(m-1)$ 
for each $m$ with $1 \le m \le d-1$. 
Moreover if the characteristic $p$ of $k$ is equal to zero, then 
the degree of $X_{m}^{*}$ 
in $\mathbb P^{\binom{d+1}{m+1}-1}$ is given by 
\begin{align*}
\deg X_m^* 
= \frac{d-m}{d-1} \big(1+\dim \mathbb G(d-m,d-1 )\big) \deg \mathbb G(m-1,d-1) 
\deg X_{1}^{*}
\end{align*}
with $\deg X_{1}^{*} = 2(d-1)$.
\end{theorem}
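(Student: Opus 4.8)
The plan is to transport everything to the Grassmann bundle $\mathbb{G}_X(d-m,\mathcal{N})$. By Lemma~\ref{lemma:tautological_line_bundle_small_grassmann}\eqref{lemma:tautological_line_bundle_small_grassmann_C_dual}, with $n=1$ and $N=d$, there is an isomorphism $j':Z_m\xrightarrow{\sim}\mathbb{G}_X(d-m,\mathcal{N})$ carrying $\gamma_m^*\mathcal{O}_{\mathbb{G}(m+1,V)}(1)$ to the relative tautological bundle $\mathcal{O}_{\mathbb{G}_X(d-m,\mathcal{N})}(1)$. Since $\mathcal{N}$ has rank $d-1$, this bundle has relative dimension $(d-m)\bigl((d-1)-(d-m)\bigr)=(d-m)(m-1)$ over the curve $X$, so $\dim Z_m=1+(d-m)(m-1)$, and once $\gamma_m$ is known to be generically finite this equals $\dim X_m^*$. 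For finiteness I would use positivity: for the rational normal curve the normal bundle is balanced, $N_{X/\mathbb{P}^d}\cong\mathcal{O}_{\mathbb{P}^1}(d+2)^{\oplus(d-1)}$, so $\mathcal{N}=N_{X/\mathbb{P}^d}\otimes\mathcal{O}_X(-1)\cong\mathcal{O}_{\mathbb{P}^1}(2)^{\oplus(d-1)}$ is ample. Hence $\mathcal{O}_{\mathbb{G}_X(d-m,\mathcal{N})}(1)$, the restriction of $\mathcal{O}_{\mathbb{P}_X(\wedge^{d-m}\mathcal{N})}(1)$ under the relative Pl\"ucker embedding, is ample; as $Z_m$ is projective and $\gamma_m$ pulls the very ample $\mathcal{O}_{\mathbb{G}(m+1,V)}(1)$ back to an ample bundle, the proper morphism $\gamma_m$ is finite.

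For the degree in characteristic zero, Lemma~\ref{lemma:deg_X_m} gives $\deg X_m^*=d_m^*=\int_{Z_m}c_1(\gamma_m^*\mathcal{O}(1))^{\dim Z_m}$, which under $j'$ becomes $\int_{\mathbb{G}_X(d-m,\mathcal{N})}c_1(\mathcal{O}_{\mathbb{G}_X(d-m,\mathcal{N})}(1))^{\dim Z_m}$. I would evaluate this by pushing forward to $X$ with Fact~\ref{fact:degree_formula_for_grassmann_bundles}, taking $\mathcal{E}=\mathcal{N}$ of rank $d-1$, corank $d-m$, so that $\varepsilon=((m-1)^{d-m})$ and $k=\dim Z_m$. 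Because $k-(d-m)(m-1)=1$, only the single-box partition $\lambda=(1,0,\dots,0)$ contributes, and $\varDelta_{(1,0,\dots,0)}(s(\mathcal{N}))=s_1(\mathcal{N})$; thus $\deg X_m^*=f^{(1,0,\dots,0)+\varepsilon}\int_X s_1(\mathcal{N})$. A Chern-class computation from \eqref{diagram:principal_bundle} (or directly from the splitting above) gives $\int_X s_1(\mathcal{N})=c_1(\mathcal{N})=2(d-1)$, which is exactly $\deg X_1^*$ by the $n=1$ case of Proposition~\ref{proposition:ordinary_gauss_map}.

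It then remains to identify the combinatorial factor. Here $(1,0,\dots,0)+\varepsilon=(m,(m-1)^{d-m-1})=:\mu$ is the rectangle $R:=((m-1)^{d-m})$ with one box adjoined to the first row; since $f^R=f^{((d-m)^{m-1})}=\deg\mathbb{G}(m-1,d-1)$ by \eqref{equation:degree_of_Grassmann_variety} and conjugation, I want $f^\mu=\tfrac{d-m}{d-1}\bigl(1+\dim\mathbb{G}(d-m,d-1)\bigr)f^R$. Applying the Hook Length Formula \eqref{equation:hook_lengths} to both shapes, the ratio of factorials produces $|R|+1=1+(m-1)(d-m)=1+\dim\mathbb{G}(d-m,d-1)$, while the hook-length products differ only along the first row and at the new corner. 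A direct computation of $\prod_{j=1}^{m-1}(d-j)\cdot 1$ against $\prod_{j=1}^{m-1}(d-1-j)$ collapses to $\prod_R h/\prod_\mu h=(d-m)/(d-1)$, giving the claimed identity; combining with $\int_X s_1(\mathcal{N})=\deg X_1^*$ yields the stated formula.

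The main obstacle is this last hook-length computation: I must track exactly how the arm lengths of the first-row cells each grow by one and how the new corner cell (of hook $1$) enters, and check that the two factorial ranges $\prod_{j=1}^{m-1}(d-j)$ and $\prod_{j=1}^{m-1}(d-1-j)$ telescope to $(d-1)/(d-m)$. Everything else is an application of the cited lemmas together with the standard ampleness of $\mathcal{N}$; the arithmetic of the Segre and Chern classes and the bookkeeping of which partition survives are routine by comparison.
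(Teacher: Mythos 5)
Your proof is correct, but the degree computation takes a genuinely different route from the paper's own proof of this theorem. The paper exploits the splitting $\mathcal N\simeq\mathcal O_{\mathbb P^1}(2)^{\oplus(d-1)}$ twice: first (as you do) to get finiteness via ampleness, and then again, via Lemma \ref{lemma:tautological_line_bundle_twisted}\eqref{lemma:tautological_line_bundle_twisted_2}, to trivialize the Grassmann bundle as a product $\mathbb G(d-m,d-1)\times\mathbb P^1$ carrying the line bundle $p_1^*\mathcal O_{\mathbb G(d-m,d-1)}(1)\otimes p_2^*\mathcal O_{\mathbb P^1}(2d-2m)$; the degree is then a one-line binomial expansion, $d_m^*=M\cdot\deg\mathbb G(d-m,d-1)\cdot(2d-2m)$ with $M=1+(d-m)(m-1)$, and no Schur-polynomial or tableau combinatorics is needed. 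You instead push forward by the Kaji--Terasoma formula (Fact \ref{fact:degree_formula_for_grassmann_bundles}), observe that only $\lambda=(1,0,\dots,0)$ survives, and convert $f^{(m,(m-1)^{d-m-1})}$ into the stated product by a hook-length identity --- which is precisely the argument the paper uses later for Corollary \ref{corollary:degree_formula_in_dim1}, the generalization to \emph{arbitrary} non-degenerate non-singular curves; your hook-length bookkeeping (first-row hooks rising from $d-1-j$ to $d-j$, new corner of hook $1$, ratio $(d-m)/(d-1)$, factorial ratio $|R|+1$) checks out, and it agrees with the identity $f^\mu=\tfrac{(N-m)(1+(m-1)(N-m))}{N-1}f^\varepsilon$ recorded there. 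What each approach buys: the paper's product trick is the most elementary path but is tied to the splitting of $\mathcal N$; your path shows the degree formula depends only on $c_1(\mathcal N)=2d-2$ (the splitting being needed only for finiteness of $\gamma_m$, as opposed to mere generic finiteness), which is exactly why the same formula extends to all curves with $\deg X_1^*=2g-2+2d$. One small point of care: your assertion that ampleness of $\mathcal N$ makes $\mathcal O_{\mathbb G_X(d-m,\mathcal N)}(1)$ ample is cleanest here because $\wedge^{d-m}\mathcal N\simeq\mathcal O_{\mathbb P^1}(2(d-m))^{\oplus\binom{d-1}{d-m}}$ is visibly ample from the splitting in any characteristic; quoting general theorems on exterior powers of ample bundles would be unnecessary (and delicate in characteristic $p$), and the finiteness statement is claimed in all characteristics.
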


\begin{proof}
It follows from Lemma 
\ref{lemma:tautological_line_bundle_small_grassmann} 
\eqref{lemma:tautological_line_bundle_small_grassmann_C_dual}  
that 
$$
j'_{*} \gamma_m ^*\mathcal O_{\mathbb G(m+1,V)}(1) 
= 
\mathcal O_{\mathbb G_X(d-m, \mathcal N)}(1) 
. 
$$ 
Since 
$\mathcal N =\mathcal O_{\mathbb P^1}(2)^{\oplus d-1}$ 
in any characteristic 
$p$ 
(\cite[Example (3.5)]{kajiNB}), 
it follows from Lemma \ref{lemma:tautological_line_bundle_twisted} 
\eqref{lemma:tautological_line_bundle_twisted_2}
that 
$$
i^{*}\mathcal O_{\mathbb G_X(d-m, \mathcal N)}(1) 
= 
p_{1}^* \mathcal O_{\mathbb G(d-m, d-1)}(1) \otimes
p_{2}^* \mathcal O_{\mathbb P^1}(2)^{\otimes d-m}
. 
$$ 
Thus we have 
$$
i^{*}j'_{*}\gamma_m ^*\mathcal O_{\mathbb G(m+1,V)}(1) 
= 
p_{1}^* \mathcal O_{\mathbb G(d-m, d-1)}(1) \otimes
p_{2}^* \mathcal O_{\mathbb P^1}(2d-2m) 
, 
$$
which is ample by $m < d$, 
so that 
$\gamma_m ^*\mathcal O_{\mathbb G(m+1,V)}(1)$ is also ample. 
Hence $\gamma_m$ is a finite morphism, 
in particular, $\dim X_{m}^{*} = \dim Z_{m}=1+(d-m)(m-1)$. 
It follows that 
{\allowdisplaybreaks 
\begin{align*}
d_m^{*} 
&= 
\int 
_{\mathbb G(d-m, d-1)\times \mathbb P^1}
\big( p_{1}^*c_1(
\mathcal O_{\mathbb G(d-m, d-1)}(1) )
+ 
p_{2}^*c_1( 
\mathcal O_{\mathbb P^1}(2d-2m)
)
\big)^{M}
\\&=
M \cdot \deg \mathbb G(d-m, d-1) \cdot (2d-2m)
, 
\end{align*}
}%
where 
$M 
:=1+(d-m)(m-1)$. 
The conclusion follows from Lemma \ref{lemma:deg_X_m}
and 
Proposition \ref{proposition:ordinary_gauss_map}
\eqref{proposition:ordinary_gauss_map_Veronese}. 
\end{proof}

\begin{example}\label{example:Veronese_curve}
Consider the first non-trivial case: 
$X=v_4(\mathbb P^1) \subseteq \mathbb P^4$ with $m=2$.  
According to Theorem \ref{theorem:degree_formula_Veronese_curve}, 
we have $\dim X_2^*=3$, 
and $\deg X_2^* = 12$ if $p=0$, 
where $f^{\varepsilon} = \deg \mathbb G(1,3) = 1$ with $\varepsilon = (1^{2})$.  
This means that 
\textit{%
there are exactly $12$ $2$-planes $W\subseteq \mathbb P^4$ 
such that 
$W$ is tangent to $X$ and 
intersects given $3$ lines in general position}: 
Indeed, 
the class of hyperplane sections
of $\mathbb G(m+1,V) = \mathbb G(3,5)$  
is represented by a divisor,  
$$
\Omega(L)
:= \{ W \vert W \cap L \ne \emptyset \} 
,
$$
where $L \subseteq \mathbb P^4$ is a linear subspace of dimension $N-m-1=1$ 
(\cite[\S14.7]{fulton}). 
\end{example}

\subsection{Projective varieties}

\begin{theorem}\label{theorem:degree_gamma_m}
For a non-degenerate, non-singular projective variety $X\subseteq \mathbb P^N$ 
of dimension $n$ 
and 
for each integer $m$ with $n \le m \le N-1$, 
if the characteristic $p$ of $k$ is equal to zero and $\gamma_{m}$ is generically finite, 
then 
the degree of $X_{m}^{*}$ 
in $\mathbb P^{\binom{N+1}{m+1}-1}$ is given by 
{\allowdisplaybreaks 
\begin{align}
\tag{1}\label{equation:degree_X_m_C_dual}
\deg {X_m^*}
&= 
\sum_{ \vert \lambda \vert = n} 
f^{\lambda+\varepsilon }
\int_X 
\varDelta _{\lambda}(s(\mathcal N))   
,
\\
\tag{2}\label{equation:degree_X_m_C}
\deg {X_m^*}
&= 
\sum_{k = 0}^{n} 
(-1)^{n-k}
\tbinom{M}{k}
\sum_{ \vert \lambda' \vert = n-k} 
f^{\lambda'+\varepsilon' }
\int_X 
\varDelta _{\lambda'}(s(\mathcal N))
\cdot 
(K_{X}+(n+1)H)^{k} 
, 
\end{align}
}%
where 
$\lambda =(\lambda_1 , \dots, \lambda_{N-m})$ 
and 
$\lambda' =(\lambda'_{1} , \dots, \lambda'_{m-n})$ 
are partitions 
with 
$\vert \lambda \vert = \sum \lambda_i$  and 
$\vert \lambda' \vert = \sum \lambda'_i$, 
$\varepsilon := ((m-n)^{N-m})$,  
$\varepsilon' := ((N-m)^{m-n})$, 
$f^{*}$
denotes the number of standard Young tableaux with shape $*$,
$\varDelta_{*}(s(\mathcal N))$ is 
the Schur polynomial in the Segre classes of 
$\mathcal N = { N }_{X/\mathbb P^N} \otimes \mathcal O_X(-1) $ 
for a partition $*$, 
$K_{X}$ is the canonical divisor class, 
$H:= c_{1}(\mathcal O_{X}(1))$ the class of hyperplane sections, 
and 
$M:=\dim X_{m}^{*}=n+(N-m)(m-n)$. 
\end{theorem}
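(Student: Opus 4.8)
The plan is to reduce everything to the single intersection number $d_m^*$ introduced in \S1 and then apply Lemma~\ref{lemma:deg_X_m}: since we are assuming $p=0$ and $\gamma_m$ generically finite, the generic finiteness forces $d_m^*>0$, and that lemma then gives $\deg X_m^* = d_m^*$ outright. So it suffices to evaluate
$
d_m^* = \int_{Z_m} c_1(\gamma_m^*\mathcal O_{\mathbb G(m+1,V)}(1))^{\dim Z_m}
$
twice, transporting the computation to each of the two Grassmann-bundle models of $Z_m$ furnished by Lemma~\ref{lemma:tautological_line_bundle_small_grassmann}, and in each case invoking the push-forward formula of Fact~\ref{fact:degree_formula_for_grassmann_bundles}.

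For \eqref{equation:degree_X_m_C_dual} I would use the model $j' : Z_m \to \mathbb G_X(N-m,\mathcal N)$ of Lemma~\ref{lemma:tautological_line_bundle_small_grassmann}\eqref{lemma:tautological_line_bundle_small_grassmann_C_dual}, under which $\gamma_m^*\mathcal O_{\mathbb G(m+1,V)}(1)$ becomes $\mathcal O_{\mathbb G_X(N-m,\mathcal N)}(1)$. Here $\mathcal E := \mathcal N$ has rank $r = N-n$ and I take $d := N-m$, so that $r-d = m-n$ and $(r-d)d = (N-m)(m-n)$; since $\dim Z_m = n+(N-m)(m-n)$, the exponent $k := \dim Z_m$ satisfies $k-(r-d)d = n$. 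Pushing forward by $\pi_m$ and applying Fact~\ref{fact:degree_formula_for_grassmann_bundles} with $\varepsilon = ((m-n)^{N-m})$ gives $\pi_{m*}(c_1(\mathcal O(1))^{\dim Z_m}) = \sum_{|\lambda|=n} f^{\lambda+\varepsilon}\,\varDelta_\lambda(s(\mathcal N))$, and integrating over $X$ yields \eqref{equation:degree_X_m_C_dual} immediately.

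For \eqref{equation:degree_X_m_C} I would instead use $j : Z_m \to \mathbb G_X(m-n,\mathcal N^\vee)$ of Lemma~\ref{lemma:tautological_line_bundle_small_grassmann}\eqref{lemma:tautological_line_bundle_small_grassmann_C}, under which the same line bundle becomes $\mathcal O_{\mathbb G_X(m-n,\mathcal N^\vee)}(1)\otimes \pi_m^*\det\mathcal P$. Writing $L$ and $D := \pi_m^*\det\mathcal P$ for the two factors and recalling $c_1(\det\mathcal P) = K_X+(n+1)H$ from \eqref{equation:c_1_of_principal_bundle}, I would binomially expand $d_m^* = \int (c_1(L)+c_1(D))^M$ with $M = \dim Z_m$. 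Since $c_1(D)$ is pulled back from the $n$-dimensional base $X$, only the terms carrying $c_1(D)^k$ with $0\le k\le n$ survive; setting $k = M-j$ and applying Fact~\ref{fact:degree_formula_for_grassmann_bundles} to $\mathbb G_X(m-n,\mathcal N^\vee)$ (now $\mathcal E = \mathcal N^\vee$, $d = m-n$, $r-d = N-m$, $\varepsilon' = ((N-m)^{m-n})$) produces $\pi_{m*}(c_1(L)^{M-k}) = \sum_{|\mu|=n-k} f^{\mu+\varepsilon'}\,\varDelta_\mu(s(\mathcal N^\vee))$, accompanied by the coefficient $\binom{M}{k}$ and the factor $(K_X+(n+1)H)^k$.

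The one genuinely delicate step — and the main obstacle — is converting $\varDelta_\mu(s(\mathcal N^\vee))$ into $\varDelta_\mu(s(\mathcal N))$ to match the stated form. Using the convention $s(\mathcal E)c(\mathcal E^\vee)=1$ fixed earlier, a Chern-root computation gives $s_i(\mathcal N^\vee) = (-1)^i s_i(\mathcal N)$. Substituting into the determinant $\varDelta_\mu(s(\mathcal N^\vee)) = \det[s_{\mu_i+j-i}(\mathcal N^\vee)]_{1\le i,j\le m-n}$ and pulling the sign $(-1)^{\mu_i+j-i}$ out of the $(i,j)$-entry, the factor $(-1)^{\mu_i}$ from each row contributes $(-1)^{|\mu|}$, while the two triangular factors $\prod_i(-1)^{i}$ and $\prod_j(-1)^{j}$ multiply to $+1$; hence $\varDelta_\mu(s(\mathcal N^\vee)) = (-1)^{|\mu|}\varDelta_\mu(s(\mathcal N))$. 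With $|\mu| = n-k$ this produces precisely the sign $(-1)^{n-k}$, and renaming $\mu = \lambda'$ gives \eqref{equation:degree_X_m_C}.
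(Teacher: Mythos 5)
Your proposal is correct and follows essentially the same route as the paper's own proof: both identities are obtained by transporting $d_m^*$ to the two Grassmann-bundle models $\mathbb G_X(N-m,\mathcal N)$ and $\mathbb G_X(m-n,\mathcal N^\vee)$ of Lemma~\ref{lemma:tautological_line_bundle_small_grassmann}, applying Fact~\ref{fact:degree_formula_for_grassmann_bundles} (with the binomial expansion and the vanishing of terms with $k>n$ for the second formula), and concluding with Lemma~\ref{lemma:deg_X_m}. The only difference is that you spell out the sign identity $\varDelta_{\lambda'}(s(\mathcal N^\vee))=(-1)^{\vert\lambda'\vert}\varDelta_{\lambda'}(s(\mathcal N))$, which the paper uses silently in the last line of its computation.
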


\begin{proof}
\eqref{equation:degree_X_m_C_dual} 
It follows from Lemma 
\ref{lemma:tautological_line_bundle_small_grassmann} \eqref{lemma:tautological_line_bundle_small_grassmann_C_dual}
and 
Fact \ref{fact:degree_formula_for_grassmann_bundles} 
that 
\begin{equation}\label{equation:degree_gamma_m_C_dual}
d_{m}^{*} 
=
\int_{\mathbb G_{X}(N-m, \mathcal N)}
c_1 ( \mathcal O_{\mathbb G_{X}(N-m, \mathcal N) }(1) )^M
=
\sum_{ \vert \lambda \vert = n} 
f^{\lambda+\varepsilon }
\int_X 
\varDelta _{\lambda}(s(\mathcal N))
,
\end{equation}  
where $M= \dim \mathbb G_{X}(N-m, \mathcal N)$. 
Then the conclusion follows from Lemma \ref{lemma:deg_X_m}.

\noindent \eqref{equation:degree_X_m_C} 
It follows from 
Lemma 
\ref{lemma:tautological_line_bundle_small_grassmann} \eqref{lemma:tautological_line_bundle_small_grassmann_C}, 
\eqref{equation:c_1_of_principal_bundle}
and 
Fact \ref{fact:degree_formula_for_grassmann_bundles} 
that 
{\allowdisplaybreaks 
\begin{align*}
d_{m}^{*} 
&=
\int_{\mathbb G_{X}(m-n, \mathcal N)}
(
c_1 ( \mathcal O_{\mathbb G_{X}(m-n, \mathcal N^{\vee}) }(1) )
+ 
\pi_{m}^{*}c_{1}(\det \mathcal P)
)^M
\\&=
\int_{\mathbb G_{X}(m-n, \mathcal N)}
\big(
c_1 ( \mathcal O_{\mathbb G_{X}(m-n, \mathcal N^{\vee}) }(1) )
+ 
\pi_{m}^{*}(K_{X}+(n+1)H)
\big)^M
\\&=
\sum_{k = 0}^{M} 
\tbinom{M}{k}
\int_{\mathbb G_{X}(m-n, \mathcal N)}
c_1 ( \mathcal O_{\mathbb G_{X}(m-n, \mathcal N^{\vee}) }(1) )^{M-k}
\cdot 
\pi_{m}^{*}(K_{X}+(n+1)H)^{k}
\\&=
\sum_{k = 0}^{n} 
\tbinom{M}{k}
\int_X
\pi_{m*}( 
c_1 ( \mathcal O_{\mathbb G_{X}(m-n, \mathcal N^{\vee}) }(1) )^{M-k}
)
\cdot 
(K_{X}+(n+1)H)^{k}
\\&=
\sum_{k = 0}^{n} 
\tbinom{M}{k}
\sum_{ \vert \lambda' \vert = n-k} 
f^{\lambda'+\varepsilon' }
\int_X 
\varDelta _{\lambda'}(s(\mathcal N^{\vee}))
\cdot 
(K_{X}+(n+1)H)^{k}
\\&=
\sum_{k = 0}^{n} 
(-1)^{n-k}
\tbinom{M}{k}
\sum_{ \vert \lambda' \vert = n-k} 
f^{\lambda'+\varepsilon' }
\int_X 
\varDelta _{\lambda'}(s(\mathcal N))
\cdot 
(K_{X}+(n+1)H)^{k}
. 
\end{align*}  
}%
Then the conclusion follows from Lemma \ref{lemma:deg_X_m}. 
\end{proof}

Theorem \ref{theorem:degree_gamma_m}
\eqref{equation:degree_X_m_C_dual}
with $m:= N-1$ yields the Katz-Kleiman formula, as follows:

\begin{corollary}[{Katz \cite[\S5]{katz}, Kleiman \cite[IV, \S D]{kleiman1977}}]
\label{corollary:degree_gamma_N-1_general}
With the same assumption as above, we have 
$$
d_{N-1}^{*} 
= 
\int_X s_n(\mathcal N) , 
$$ 
which is equal to 
$\deg {X_{N-1}^*}$ if $p=0$ and $\gamma_{N-1}$ is generically finite. 
\end{corollary}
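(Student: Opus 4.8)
The plan is to obtain this as the special case $m = N-1$ of Theorem \ref{theorem:degree_gamma_m} \eqref{equation:degree_X_m_C_dual}, or more precisely of the intermediate identity \eqref{equation:degree_gamma_m_C_dual} appearing in its proof. First I would observe that setting $m = N-1$ makes $N - m = 1$, so that the partitions $\lambda = (\lambda_1, \dots, \lambda_{N-m})$ indexing the sum have a single part. The constraint $|\lambda| = n$ then pins down $\lambda = (n)$ as the unique admissible partition, so the sum reduces to a single summand.

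Next I would evaluate that summand. Since $\lambda = (n)$ has one part, the Schur polynomial is the $1 \times 1$ determinant $\varDelta_{(n)}(s(\mathcal N)) = s_n(\mathcal N)$, the top Segre class; recall that $\mathcal N$ has rank $N - n$, so $s_n(\mathcal N)$ sits in codimension $n$ and is thus integrable over the $n$-dimensional variety $X$. For the multiplicity, note that $\varepsilon = ((m-n)^{N-m})$ becomes the single integer $N-1-n$, so the shape $\lambda + \varepsilon = (N-1)$ is a single row; a single row admits exactly one standard filling, whence $f^{\lambda+\varepsilon} = 1$.

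Combining these, \eqref{equation:degree_gamma_m_C_dual} collapses to $d_{N-1}^* = \int_X s_n(\mathcal N)$, and this identity holds in arbitrary characteristic since it is purely a computation in the Chow ring of the Grassmann bundle $\mathbb G_X(N-m, \mathcal N)$, not requiring the hypothesis $p = 0$. The remaining assertion, that this number equals $\deg X_{N-1}^*$ once $p = 0$ and $\gamma_{N-1}$ is generically finite, is then immediate from Lemma \ref{lemma:deg_X_m}. I expect no genuine obstacle here: the entire content of the corollary is already absorbed into the general degree formula, and the only point requiring care is the bookkeeping that forces the indexing sum to a single term, which the one-part constraint $N - m = 1$ makes automatic. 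In this way one recovers the classical Katz--Kleiman formula for the degree of the dual variety.
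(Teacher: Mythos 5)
Your proposal is correct and coincides with the paper's own argument: the corollary is exactly the specialization $m=N-1$ of Theorem \ref{theorem:degree_gamma_m} \eqref{equation:degree_X_m_C_dual}, where the one-part constraint forces $\lambda=(n)$, so that $\varDelta_{(n)}(s(\mathcal N))=s_n(\mathcal N)$ and $f^{\lambda+\varepsilon}=f^{(N-1)}=1$. Your extra care in invoking the intermediate Chow-ring identity \eqref{equation:degree_gamma_m_C_dual} (valid in arbitrary characteristic) rather than the final statement of the theorem, and then appealing to Lemma \ref{lemma:deg_X_m} for the case $p=0$ with $\gamma_{N-1}$ generically finite, is precisely the right bookkeeping.
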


Theorem \ref{theorem:degree_gamma_m}
\eqref{equation:degree_X_m_C_dual}
with $n=1$ yields 
a generalization of 
Theorem \ref{theorem:degree_formula_Veronese_curve}: 

\begin{corollary}\label{corollary:degree_formula_in_dim1}
For a non-degenerate, non-singular projective curve 
$X \subseteq \mathbb P^N$ of degree $d$ and genus $g$, 
the $m$-th Gauss map $\gamma_m$ is generically finite and 
$\dim X_m^* = 1+(N-m)(m-1)$ 
for each $m$ with $1 \le m \le N-1$. 
Moreover if $p=0$, 
then 
\begin{equation*}\label{equation:deg_gamma_m_in_dim1}
\deg X_{m}^* 
= 
\frac{N-m}{N-1 } \big(1+\dim \mathbb G(N-m,N-1)\big)
\deg \mathbb G(N-m,N-1)  
\deg X_{1}^{*}
\end{equation*}
with 
$\deg X_{1}^{*} = 2g-2+2d$
\end{corollary}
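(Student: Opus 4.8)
The plan is to read this off as the $n=1$ specialization of Theorem~\ref{theorem:degree_gamma_m}\eqref{equation:degree_X_m_C_dual}, but first to dispose of the characteristic-free assertions by showing directly that the self-intersection number $d_m^*$ is strictly positive. By Lemma \ref{lemma:tautological_line_bundle_small_grassmann} \eqref{lemma:tautological_line_bundle_small_grassmann_C_dual} we have $Z_m\simeq \mathbb G_X(N-m,\mathcal N)$ with $\gamma_m^*\mathcal O_{\mathbb G(m+1,V)}(1)$ corresponding to $\mathcal O_{\mathbb G_X(N-m,\mathcal N)}(1)$, so Fact~\ref{fact:degree_formula_for_grassmann_bundles} computes $d_m^*$ as a push-forward in the Chow ring; since this is purely intersection-theoretic, the computation is valid for every $p$. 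I would then deduce generic finiteness and the dimension from the positivity of $d_m^*$ via Lemma~\ref{lemma:deg_X_m}, obtaining $\dim X_m^*=\dim Z_m=1+(N-m)(m-1)$ in arbitrary characteristic.

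For the evaluation, put $r:=\rk\mathcal N=N-1$ and $d:=N-m$, so that $r-d=m-1$ and $\varepsilon=((m-1)^{N-m})$. With $n=1$ the partitions $\lambda=(\lambda_1,\dots,\lambda_{N-m})$ with $|\lambda|=1$ reduce to the single one $\lambda=(1,0,\dots,0)$, and its Schur polynomial $\varDelta_\lambda(s(\mathcal N))=\det[s_{\lambda_i+j-i}(\mathcal N)]$ collapses, upon expanding along the first column using $s_0=1$ and $s_k=0$ for $k<0$, to $s_1(\mathcal N)$. Now $s_1(\mathcal N)=c_1(\mathcal N)=c_1(\det\mathcal P)=K_X+2H$ by \eqref{diagram:principal_bundle} and \eqref{equation:c_1_of_principal_bundle} (using $\det\mathcal N=\det\mathcal P$), so $\int_X\varDelta_\lambda(s(\mathcal N))=2g-2+2d=\deg X_1^*$, which is positive because a non-degenerate smooth curve in $\mathbb P^N$ has $d\ge N\ge 2$. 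Hence $d_m^*=f^{\lambda+\varepsilon}\,\deg X_1^*>0$, which yields the characteristic-free claims at once and, when $p=0$, the equality $\deg X_m^*=d_m^*$ from Lemma~\ref{lemma:deg_X_m}.

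It remains to rewrite the coefficient $f^{\lambda+\varepsilon}$ in Grassmannian terms. Here $\lambda+\varepsilon=(m,(m-1)^{N-m-1})$ is the $(N-m)\times(m-1)$ rectangle $\varepsilon$ with one box adjoined to its first row, and by \eqref{equation:degree_of_Grassmann_variety} one has $f^\varepsilon=\deg\mathbb G(N-m,N-1)$ and $\dim\mathbb G(N-m,N-1)=(m-1)(N-m)$. I would compute the ratio $f^{\lambda+\varepsilon}/f^\varepsilon$ straight from \eqref{equation:hook_lengths}, taking $d=N-m$ parts for both shapes: writing $a:=N-m$ and $b:=m-1$, the factorial $|\cdot|!$ contributes $ab+1$, the product $\prod_{i<j}(\cdot)$ contributes $a$ (only the $i=1$ factors change), and the denominator $\prod_i(\cdot)!$ contributes $a+b$ (only the $i=1$ factor changes), giving $f^{\lambda+\varepsilon}/f^\varepsilon=a(1+ab)/(a+b)=\tfrac{N-m}{N-1}\bigl(1+(m-1)(N-m)\bigr)$. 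Substituting this into $d_m^*=f^{\lambda+\varepsilon}\deg X_1^*$ produces the asserted formula, and the case $m=1$ checks out as the tautology $\deg X_1^*=\deg X_1^*$. The main obstacle is the bookkeeping in this hook-length ratio — correctly isolating, across all three products of \eqref{equation:hook_lengths}, exactly the entries at which the two shapes differ — but it is elementary; everything else is a direct specialization of the machinery already in place.
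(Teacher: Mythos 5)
Your proposal is correct and takes essentially the same route as the paper's proof: the paper likewise specializes the characteristic-free identity \eqref{equation:degree_gamma_m_C_dual} to the unique partition $(1,0^{N-m-1})$, obtains $d_m^*=(2d+2g-2)f^{\mu}$ with $\mu=(m,(m-1)^{N-m-1})$, deduces generic finiteness from $d_m^*>0$ via Lemma \ref{lemma:deg_X_m}, and evaluates the same hook-length ratio $f^{\mu}/f^{\varepsilon}=(N-m)\bigl(1+(m-1)(N-m)\bigr)/(N-1)$ from \eqref{equation:hook_lengths}. The only slip is asserting $2g-2+2d=\deg X_1^*$ inside the characteristic-free part of your argument: in positive characteristic $\gamma_1$ may be inseparable of degree greater than $1$, so that $\deg X_1^*<2g-2+2d$ (e.g.\ a conic in $p=2$), and this identification should be invoked only after passing to $p=0$ (where it is Proposition \ref{proposition:ordinary_gauss_map} \eqref{proposition:ordinary_gauss_map_general}, or your own $m=1$ case), while the positivity step needs only $\int_X s_1(\mathcal N)=2g-2+2d>0$.
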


\begin{proof}
First of all it follows from Proposition \ref{proposition:ordinary_gauss_map} 
\eqref{proposition:ordinary_gauss_map_general} that 
$\deg X_{1}^{*} = 2g-2+2d$. 
It follows from \eqref{equation:degree_gamma_m_C_dual}
that 
$d_{m}^{*} =(2d+2g-2) f^{\mu}$
with 
$\mu :=  (m,(m-1)^{N-m-1})$: 
Indeed, 
the partition $\lambda$ in the summation 
is determined to be
 $(1, 0^{N-m-1})$ 
by $\vert \lambda \vert = n =1$, 
and 
$\varDelta_{\lambda}(\mathcal N) 
=s_1(\mathcal N)=c_1(\mathcal N)$,  
which is of degree $2d + 2g - 2$ since  
$\det \mathcal N 
=\omega_X \otimes \mathcal O_X(2)$
by \eqref{diagram:principal_bundle}.  
Therefore $d_{m}^{*} >0$ by $X \ne \mathbb P^N$, 
and it follows from 
Lemma \ref{lemma:deg_X_m} that 
$\deg X_{m}^* = (2d+2g-2) f^{\mu}$. 
Now from \eqref{equation:hook_lengths} we see that 
$$
f^{\mu} = \frac{(N-m)(1+(m-1)(N-m))}{N-1} f^{\varepsilon} , 
$$
where $\varepsilon := ((m-1)^{N-m})$, 
so that the conclusion follows from  \eqref{equation:degree_of_Grassmann_variety}.
\end{proof}

\subsection{Veronese varieties}

Let 
$X = v_d(\mathbb P^n) \subseteq \mathbb P^N$ 
be the Veronese variety with $N+1 = \binom{n+d}{d}$, 
and set 
$\mathcal N := { N }_{X/\mathbb P^N} \otimes \mathcal O_X(-1) $, 
as before.
It follows from \eqref{diagram:principal_bundle} and 
the Euler sequence on $\mathbb P^n$
that 
$
s(\mathcal N) 
= 1 / c(\mathcal N^\vee) 
= c(\mathcal P)
=(1+(d-1)h) ^{n+1} , 
$ 
so that 
\begin{equation}\label{equation:Segre_classes_of_C_dual}
s_i(\mathcal N) =  \tbinom{n+1}{i}(d-1)^i h^i,  
\end{equation}
where 
$\mathcal P := P_X^1(\mathcal O_X(1))= P_{\mathbb P^{n}}^1(\mathcal O_{\mathbb P^n}(d))$ 
and 
$h:= c_1(\mathcal O_{\mathbb P^n}(1))$.

\begin{remark}
It follows from Corollary \ref{corollary:degree_gamma_N-1_general} 
and \eqref{equation:Segre_classes_of_C_dual} that 
$d_{N-1}^{*} =(n+1)(d-1)^n$, 
which is positive. 
Therefore 
it follows from Lemma \ref{lemma:deg_X_m} that 
$\deg {X_{N-1}^*}=(n+1)(d-1)^n$ if $p = 0$: 
this recovers Boole's formula \eqref{equation:Boole's_formula}.
\end{remark}

\begin{proposition}\label{proposition:Schur_of_C_dual}
The Schur polynomial 
$\varDelta_{\lambda}(s(\mathcal N))$
in the Segre classes of 
the twisted normal bundle 
$\mathcal N={ N }_{v_{d}(\mathbb P^{n})/\mathbb P^N} \otimes \mathcal O_X(-1)$ 
for a partition 
$\lambda =(\lambda_1, \dots , \lambda_{e})$ 
is given by 
$$
\varDelta_{\lambda}(s(\mathcal N))
=
\frac{(d-1)^{\vert \lambda \vert}}{\vert \lambda \vert !}
f^{\lambda}
h^{\vert \lambda \vert}
\prod_{i=1}^{e}
\frac{(n+i)! }{(n+i-\lambda_i)! }
, 
$$
where 
$\vert \lambda \vert := \sum_{i=1}^{e} \lambda_i$, 
and 
$f^{\lambda}$ 
denotes the number of standard Young tableaux with shape $\lambda$. 
\end{proposition}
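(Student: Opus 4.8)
The plan is to reduce the stated formula to a purely numerical determinant identity and then to evaluate that determinant by symmetric function theory.

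First I would substitute \eqref{equation:Segre_classes_of_C_dual} into the defining determinant $\varDelta_\lambda(s(\mathcal N)) = \det[s_{\lambda_i+j-i}(\mathcal N)]_{1\le i,j\le e}$, whose $(i,j)$-entry then becomes $\tbinom{n+1}{\lambda_i+j-i}\,((d-1)h)^{\lambda_i+j-i}$ (with the convention that a binomial coefficient with negative lower index vanishes). In the Leibniz expansion, every term $\pm\prod_i \tbinom{n+1}{\lambda_i+\sigma(i)-i}\,((d-1)h)^{\lambda_i+\sigma(i)-i}$ has total $(d-1)h$-degree $\sum_i(\lambda_i+\sigma(i)-i)=|\lambda|$, since $\sum_i\sigma(i)=\sum_i i$ for every permutation $\sigma$. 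Hence the determinant is homogeneous of degree $|\lambda|$ in $(d-1)h$, and factors as
$$\varDelta_\lambda(s(\mathcal N)) = (d-1)^{|\lambda|}\,h^{|\lambda|}\,\det\Big[\tbinom{n+1}{\lambda_i+j-i}\Big]_{1\le i,j\le e}.$$
Comparing with the asserted formula, it remains to prove the scalar identity
$$\det\Big[\tbinom{n+1}{\lambda_i+j-i}\Big]_{1\le i,j\le e} = \frac{f^\lambda}{|\lambda|!}\prod_{i=1}^e\frac{(n+i)!}{(n+i-\lambda_i)!}.$$

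For this identity the key observation is that $\tbinom{n+1}{k}$ is exactly the $k$-th elementary symmetric polynomial $e_k$ evaluated at $n+1$ variables all equal to $1$. Thus the matrix entries are $e_{\lambda_i+j-i}(1^{\,n+1})$, and the dual Jacobi--Trudi (N\"agelsbach--Kostka) identity identifies the determinant with the Schur polynomial $s_{\lambda'}(1^{\,n+1})$ of the conjugate partition $\lambda'$, evaluated at $n+1$ ones. (Equivalently: since $s(\mathcal N)=c(\mathcal P)$ and $c(\mathcal P)=(1+(d-1)h)^{n+1}$ presents $\mathcal P$ with all Chern roots equal to $(d-1)h$, the quantity $\varDelta_\lambda(s(\mathcal N))$ is $s_{\lambda'}$ of those roots, which recovers the factored form above directly.) I would then apply the hook--content formula $s_\mu(1^{\,r})=\prod_{u\in\mu}(r+c(u))/h(u)$, where $c(u)$ and $h(u)$ denote content and hook length, to $\mu=\lambda'$ and $r=n+1$.

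Finally I would transpose back from $\lambda'$ to $\lambda$: under the bijection of cells $(i,j)\in\lambda \leftrightarrow (j,i)\in\lambda'$ the content changes sign while the hook length is preserved, so
$$s_{\lambda'}(1^{\,n+1}) = \prod_{(i,j)\in\lambda}\frac{n+1+i-j}{h_\lambda(i,j)} = \Big(\prod_{(i,j)\in\lambda}(n+1+i-j)\Big)\cdot\frac{f^\lambda}{|\lambda|!},$$
the last step being the Frame--Robinson--Thrall hook length formula \eqref{equation:hook_lengths}. Reading the remaining product one row at a time gives $\prod_{j=1}^{\lambda_i}(n+1+i-j)=(n+i)!/(n+i-\lambda_i)!$, and multiplying over $i$ yields precisely the claimed product, which would complete the proof.

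I expect the only genuinely delicate point to be the bookkeeping with the conjugate partition: one must be careful that the dual Jacobi--Trudi identity produces $\lambda'$ rather than $\lambda$, and correspondingly that the content picks up a sign under transposition — this is exactly what converts $n+1+c_{\lambda'}(u)$ into the factor $n+1+i-j$ indexed by the rows of $\lambda$ and matches the factorial product in the statement. A purely determinantal alternative, factoring $\det[\tbinom{n+1}{\lambda_i+j-i}]$ into a Vandermonde-type product via the substitution $\mu_i=\lambda_i+e-i$, is available as well, but it is computationally heavier, so I would present the symmetric-function route.
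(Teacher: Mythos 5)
Your proposal is correct, and it evaluates the crucial determinant by a genuinely different method than the paper. Both arguments start the same way: substituting $s_i(\mathcal N)=\binom{n+1}{i}(d-1)^ih^i$ from \eqref{equation:Segre_classes_of_C_dual} and pulling out $(d-1)^{\vert\lambda\vert}h^{\vert\lambda\vert}$ by homogeneity, so that everything reduces to the scalar identity
$\det\bigl[\tbinom{n+1}{\lambda_i+j-i}\bigr]_{1\le i,j\le e}
=\frac{f^{\lambda}}{\vert\lambda\vert!}\prod_{i=1}^{e}\frac{(n+i)!}{(n+i-\lambda_i)!}$.
At that point the paper follows Arbarello--Cornalba--Griffiths--Harris: column operations via Pascal's rule convert the matrix to $\bigl[\tbinom{n+j}{\lambda_i+j-i}\bigr]$, factorials are factored out to reach $\det\bigl[1/(\lambda_i+j-i)!\bigr]$, this is reduced to a Vandermonde determinant, and the result is identified with $f^{\lambda}/\vert\lambda\vert!$ via \eqref{equation:hook_lengths}. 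You instead recognize the entries as $e_{\lambda_i+j-i}(1^{n+1})$, apply the dual Jacobi--Trudi (N\"agelsbach--Kostka) identity to identify the determinant with $s_{\lambda'}(1^{n+1})$, and finish with the hook--content formula and the transposition bookkeeping (content negates, hook lengths are preserved), which you carry out correctly: the row-wise product $\prod_{j=1}^{\lambda_i}(n+1+i-j)=(n+i)!/(n+i-\lambda_i)!$ is exactly what matches the statement. Your route buys conceptual transparency --- your parenthetical remark that $\varDelta_{\lambda}(s(\mathcal N))$ is just $s_{\lambda'}$ evaluated at the Chern roots of $\mathcal P$, all equal to $(d-1)h$, is in fact the shortest complete argument --- at the cost of importing two standard symmetric-function theorems (dual Jacobi--Trudi and hook--content) that the paper never states; the paper's route is longer but self-contained, using only Pascal's rule, the Vandermonde determinant, and the already-quoted \eqref{equation:hook_lengths}. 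One minor point of rigor: your final step cites \eqref{equation:hook_lengths} for the product-over-hooks identity $\prod_{u\in\lambda}h(u)^{-1}=f^{\lambda}/\vert\lambda\vert!$, whereas \eqref{equation:hook_lengths} is stated in the equivalent factorial (Vandermonde) form; the two are classically interchangeable versions of the Frame--Robinson--Thrall theorem, but you should either quote the hook form explicitly or insert the one-line conversion.
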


\begin{proof}
It follows from \eqref{equation:Segre_classes_of_C_dual} that 
{\allowdisplaybreaks 
\begin{align*}
\varDelta_{\lambda}(s(\mathcal N))
&= 
\det[s_{\lambda_i+j-i}(\mathcal N)]_{1 \le i,j\le {e}}
\\& =  
\det\begin{bmatrix}
\binom{n+1}{{\lambda_i+j-i}}
(d-1)^{\lambda_i+j-i} 
h^{\lambda_i+j-i} 
 \end{bmatrix}
_{1 \le i,j\le {e}}
\\& =  
(d-1)^{\vert \lambda \vert}
\det
\begin{bmatrix}
 \binom{n+1}{{\lambda_i+j-i}} 
 \end{bmatrix}
 _{1 \le i,j\le {e}}
 h^{\vert \lambda \vert}
.
\end{align*}
}%
To evaluate the last determinant, 
we apply the method of 
Arbarello-Cornalba-Griffiths-Harris 
\cite[II, \S5, pp.94--95]{ACGH}: 
Performing the elementary column operations,
using the equality $\binom{n}{m-1}+\binom{n}{m}=\binom{n+1}{m}$, 
we obtain
{\allowdisplaybreaks 
\begin{align*}
\det  
\begin{bmatrix}
\binom{n+1}{\lambda_i+j-i}  
\end{bmatrix}_{1 \le i,j\le {e}} 
&
= 
\det
\begin{bmatrix}
\binom{n+j}{\lambda_i+j-i}
\end{bmatrix}_{1 \le i , j \le e}
\\
&= 
\frac{\prod_{j=1}^{e} (n+j)! }{\prod_{i=1}^{e} (n-\lambda_i+i)! }
\det 
\begin{bmatrix}
\frac{1}{(\lambda_{i}+j-i)!} 
\end{bmatrix}_{1 \le i,j \le {e}}
, 
\end{align*}
}%
and 
{\allowdisplaybreaks 
\begin{align*}
\det 
\begin{bmatrix}
\frac{1}{(\lambda_{i}+j-i)!} 
\end{bmatrix}_{1 \le i,j \le {e}}
&= 
\frac{1} {\prod_{i=1}^e (\lambda_i +e-i )!} 
\det 
\begin{bmatrix}
\frac{ (\lambda_i +e-i )! }{(\lambda_{i}+j-i)!} 
\end{bmatrix}_{1 \le i,j \le {e}}
\\
&= 
\frac{1}{\prod_{i=1}^e (\lambda_i +e-i )!} 
\det 
\begin{bmatrix}
(\lambda_i +e-i )^{e-j} 
\end{bmatrix}_{1 \le i,j \le {e}}
\\
&= 
\frac{1}{\prod_{i=1}^e (\lambda_i +e-i )!} 
\prod_{1 \le i<j  \le {e}}(\lambda_i - \lambda_j -i+j) 
=
\frac{f^{\lambda}}{\vert \lambda \vert ! } 
, 
\end{align*}
}%
where the last equality 
follows from \eqref{equation:hook_lengths}. 
\end{proof}

\begin{proof}[Proof of Theorem \ref{theorem:main_theorem}]
It follows from 
Theorem \ref{theorem:degree_gamma_m} 
\eqref{equation:degree_X_m_C_dual}
and 
Proposition \ref{proposition:Schur_of_C_dual} 
that 
{\allowdisplaybreaks 
\begin{align*}
d_{m}^{*} 
= 
\sum_{ \vert \lambda \vert = n} 
f^{\lambda + \varepsilon}
\int_{\mathbb P^n} 
\varDelta _{\lambda}(s(\mathcal N))   
&=
\frac{(d-1)^n}{n!}
\sum_{ \vert \lambda \vert = n} 
f^{\lambda}f^{\lambda + \varepsilon}
\prod_{i=1}^{N-m}
\frac{(n+i)! }{(n+i-\lambda_i)! }
.
\end{align*}
}%
This completes the proof by Lemma \ref{lemma:deg_X_m}:
Indeed, the last term in the preceding formula is obviously a positive number.
\end{proof}

\begin{example}\label{example:Veronese_surface}
Consider the first non-trivial case in dimension $2$: 
$X=v_2(\mathbb P^2) \subseteq \mathbb P^5$ with $m=3$.  
According to Theorem \ref{theorem:main_theorem}, 
we have $\dim X_3^*=4$, 
and $\deg X_3^* = 21$ if $p=0$: 
Indeed, 
we have 
$\deg X_3^* = 3f^{(2,0)}f^{(3,1)} + 6f^{(1,1)}f^{(2,2)}$, 
where 
$f^{(2,0)}=f^{(1,1)}=1$, 
$f^{(3,1)}=3$ and $f^{(2,2)}=2$.
This means that 
\textit{%
there are exactly $21$ $3$-planes $W\subseteq \mathbb P^5$ 
such that 
$W$ is tangent to $X$ and 
intersects given $4$ lines in general position}: 
Indeed,  the class of hyperplane sections of $\mathbb G(m+1,V) = \mathbb G(3,6)$ 
is represented by a divisor 
$
\Omega(L)
$
with a linear subspace $L \subseteq \mathbb P^5$ of dimension $N-m-1=1$ 
(\cite[\S14.7]{fulton}). 
\end{example}

\begin{remark}
In terms of binomial and multinomial coefficients, 
one can write the formula in Theorem \ref{theorem:main_theorem}, 
as follows: 
\begin{align*}
\deg X_m^* 
=&
\frac{1}{(n+1)^{n}}
\deg X_{n}^{*} 
\sum_{ \vert \lambda \vert = n} 
{f^{\lambda}}
f^{\lambda + \varepsilon}
\frac{\prod_{i=1}^{N-m}\binom{n+i}{\lambda_i}}{\binom{n}{\lambda_1, \dots , \lambda_{N-m}}}
.
\end{align*}
\end{remark}

As a byproduct of our study of higher Gauss maps, 
we obtain a combinatorial identity, as follows:

\begin{corollary}\label{corollary:f_lambda_formula}
For any positive integer $n$, we have 
$$
\sum_{\vert \lambda \vert =n} (f^{\lambda})^{2} 
\prod_{1 \le i \le n} 
\frac{(n+i)!}{(n+i-\lambda_{i})!} 
=
(n+1)^{n} n! 
,
$$
where $\lambda = (\lambda_{1}, \dots, \lambda _{n} )$ is a partition of $n$. 
\end{corollary}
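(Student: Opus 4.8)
The plan is to obtain this purely combinatorial identity as a by-product of the geometry, by computing the degree of the image of the \emph{ordinary} Gauss map $\gamma_{n}$ in two independent ways and equating them. Concretely, I would specialize Theorem \ref{theorem:main_theorem} to $m = n$. In that case $\varepsilon = ((m-n)^{N-m}) = (0^{N-n})$ is the trivial shift, so $\lambda + \varepsilon = \lambda$ and $f^{\lambda+\varepsilon} = f^{\lambda}$, while $N - m = N - n$, and the main formula becomes
\[
\deg X_{n}^{*}
=
\frac{1}{n!\,(n+1)^{n}}\,\deg X_{n}^{*}
\sum_{|\lambda| = n}(f^{\lambda})^{2}
\prod_{i=1}^{N-n}\frac{(n+i)!}{(n+i-\lambda_{i})!},
\]
where $\lambda = (\lambda_{1}, \dots, \lambda_{N-n})$ runs over the partitions of $n$ having at most $N-n$ parts. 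It is essential that this right-hand sum is produced by the proof of Theorem \ref{theorem:main_theorem} directly from Lemma \ref{lemma:tautological_line_bundle_small_grassmann}, Fact \ref{fact:degree_formula_for_grassmann_bundles} and Proposition \ref{proposition:Schur_of_C_dual} (i.e.\ from the Grassmann-bundle degree formula together with the Segre-class computation), and does \emph{not} rely on the identity to be proved, whereas the value $\deg X_{n}^{*} = (n+1)^{n}(d-1)^{n}$ is supplied independently by Proposition \ref{proposition:ordinary_gauss_map}\eqref{proposition:ordinary_gauss_map_Veronese}.

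Since $\deg X_{n}^{*} = (n+1)^{n}(d-1)^{n} > 0$ for $d \ge 2$, I may then cancel the factor $\deg X_{n}^{*}$ from both sides, which gives
\[
\sum_{|\lambda| = n}(f^{\lambda})^{2}
\prod_{i=1}^{N-n}\frac{(n+i)!}{(n+i-\lambda_{i})!}
= n!\,(n+1)^{n}.
\]
To match this with the statement of the corollary I would take $d = 2$, for which $N - n = \binom{n+2}{2} - 1 - n = \tfrac{1}{2}n(n+1) \ge n$ for every $n \ge 1$. Then the summation range already comprises \emph{all} partitions $\lambda = (\lambda_{1}, \dots, \lambda_{n})$ of $n$ (padded by zeros up to length $N-n$), and for each index $i$ with $\lambda_{i} = 0$ the factor $(n+i)!/(n+i-\lambda_{i})!$ equals $1$, so that $\prod_{i=1}^{N-n}$ may be truncated to $\prod_{1 \le i \le n}$ without changing the value. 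This is exactly the asserted identity.

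There is no real analytic difficulty in this argument; its entire content sits in the clean specialization $m = n$, after which everything is cancellation and bookkeeping. The one point that genuinely needs attention is guarding against circularity: I must ensure that the expression for $\deg X_{n}^{*}$ appearing on the right is read off from the Grassmann-bundle computation rather than from any a priori knowledge of $\deg X_{n}^{*}$, so that equating it with the independent value $(n+1)^{n}(d-1)^{n}$ really produces new information. The only remaining care is the elementary handling of zero parts and of the range of the product, which the uniform choice $d = 2$ settles once and for all.
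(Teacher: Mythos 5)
Your proposal is correct and takes essentially the same route as the paper, whose entire proof of Corollary \ref{corollary:f_lambda_formula} is the single line ``Set $m:=n$ in Theorem \ref{theorem:main_theorem}.'' Your extra care---cancelling the positive factor $\deg X_{n}^{*}=(n+1)^{n}(d-1)^{n}$, noting that zero parts contribute trivial factors so the product can be truncated to $\prod_{1\le i\le n}$, and choosing $d=2$ so that $N-n\ge n$ ensures all partitions of $n$ appear---just makes explicit the bookkeeping and non-circularity that the paper leaves implicit.
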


\begin{proof}
Set $m:=n$ in 
Theorem \ref{theorem:main_theorem}. 
\end{proof}

As another form of the degree formula in Theorem \ref{theorem:main_theorem}, 
we have

\begin{theorem}\label{theorem:main_theorem'}
Under the same assumption as in Theorem \ref{theorem:main_theorem}, 
the degree of $X_{m}^{*}$ 
in $\mathbb P^{\binom{N+1}{m+1}-1}$ is given by 
\begin{equation*}
\deg X_{m}^* 
=
\frac{1}{(n+1)^{n}} 
\deg X_{n}^{*}
\sum_{k=0}^{n}
\frac{
(-1)^{n-k}
(n+1)^{k}
}{
(n-k)!
}
\binom{M}{k}
\sum_{ \vert \lambda' \vert = n-k} 
f^{\lambda'}f^{\lambda' + \varepsilon'}
\prod_{i=1}^{m-n}
\frac{(n+i)! }{(n+i-\lambda'_i)! }
\end{equation*}
with $\deg X_{n}^{*} =(n+1)^{n}(d-1)^{n}$, 
where 
$M:=n+(N-m)(m-n)$, 
$\lambda' =(\lambda'_1, \dots , \lambda'_{m-n})$ is a partition 
with $\vert \lambda' \vert := \sum_{i=1}^{m-n}\lambda'_i$, 
$\varepsilon' := ((N-m)^{m-n})$ is $m-n$ copies of the integer $N-m$, 
and 
$f^{\lambda'}$ is the number of standard Young tableaux with shape $\lambda'$. 
\end{theorem}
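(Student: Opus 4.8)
The plan is to rerun the proof of Theorem \ref{theorem:main_theorem} verbatim, except that I start from the \emph{other} degree formula of Theorem \ref{theorem:degree_gamma_m}, namely \eqref{equation:degree_X_m_C}, which expresses $\deg X_m^*$ via the small Grassmann bundle $\mathbb G_X(m-n,\mathcal N^\vee)$ (with partitions $\lambda'$ of length $m-n$ and $\varepsilon' = ((N-m)^{m-n})$) rather than via its dual. This formula is available in the present situation because $p=0$ and, by Theorem \ref{theorem:main_theorem}, the map $\gamma_m$ is generically finite for the Veronese variety; thus Lemma \ref{lemma:deg_X_m} gives $\deg X_m^* = d_m^*$ and formula \eqref{equation:degree_X_m_C} applies.

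First I would specialize to $X = v_d(\mathbb P^n)$ and recall from the proof of Proposition \ref{proposition:ordinary_gauss_map}\,\eqref{proposition:ordinary_gauss_map_Veronese} that $K_X + (n+1)H = (n+1)(d-1)h$, so that $(K_X+(n+1)H)^k = (n+1)^k(d-1)^k h^k$, where $h := c_1(\mathcal O_{\mathbb P^n}(1))$. Next I would insert Proposition \ref{proposition:Schur_of_C_dual}, applied with $e = m-n$ to a partition $\lambda'$ with $\vert\lambda'\vert = n-k$, to rewrite each Schur polynomial as $\varDelta_{\lambda'}(s(\mathcal N)) = \frac{(d-1)^{n-k}}{(n-k)!}\,f^{\lambda'} h^{n-k}\prod_{i=1}^{m-n}\frac{(n+i)!}{(n+i-\lambda'_i)!}$.

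The point of the computation is then a bookkeeping of degrees: multiplying this Schur class by the $k$-th power term contributes $h^{(n-k)+k} = h^n$, so integrating over $\mathbb P^n$ and using $\int_{\mathbb P^n} h^n = 1$ retains only the numerical coefficient. In this collapse the powers of $d-1$ combine to $(d-1)^n$, while the factor $(n+1)^k$ and the $\frac{1}{(n-k)!}$ survive from the $k$-th term. Collecting everything gives $\deg X_m^* = (d-1)^n \sum_{k=0}^{n}\frac{(-1)^{n-k}(n+1)^k}{(n-k)!}\binom{M}{k}\sum_{\vert\lambda'\vert=n-k} f^{\lambda'}f^{\lambda'+\varepsilon'}\prod_{i=1}^{m-n}\frac{(n+i)!}{(n+i-\lambda'_i)!}$, and substituting $(d-1)^n = \frac{1}{(n+1)^n}\deg X_n^*$ via $\deg X_n^* = (n+1)^n(d-1)^n$ yields the stated identity.

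There is no genuine obstacle here, since generic finiteness is already supplied by Theorem \ref{theorem:main_theorem} and every analytic ingredient (the formula \eqref{equation:degree_X_m_C}, the evaluation of $K_X+(n+1)H$, and Proposition \ref{proposition:Schur_of_C_dual}) is in place; the computation is entirely mechanical. The only step demanding care is the tracking of the $h$-powers and of the powers of $d-1$, to confirm that after multiplication by $(K_X+(n+1)H)^k$ precisely the top class $h^n$ remains and that the resulting scalar matches the asserted coefficient.
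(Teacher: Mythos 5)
Your proposal is correct and follows exactly the route of the paper's own proof: apply Theorem \ref{theorem:degree_gamma_m}\,\eqref{equation:degree_X_m_C} together with Proposition \ref{proposition:Schur_of_C_dual} (with $e=m-n$, $\vert\lambda'\vert=n-k$), and identify $d_m^*$ with $\deg X_m^*$ via the generic finiteness already established in Theorem \ref{theorem:main_theorem}. The only difference is that you write out the mechanical collapse of the $h$-powers and $(d-1)$-powers explicitly, which the paper leaves implicit.
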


\begin{proof}
Using 
Theorem \ref{theorem:degree_gamma_m} 
\eqref{equation:degree_X_m_C}
and 
Proposition \ref{proposition:Schur_of_C_dual}, 
we see that 
$d_{m}^{*}$ is equal to the right-hand side of the equality in the conclusion above. 
Since $d_{m}^{*} = \deg X_{m}^{*}$ 
by (the proof of) Theorem \ref{theorem:main_theorem}, 
the equality holds.  
\end{proof}

Compared with the formula in Theorem \ref{theorem:main_theorem}, 
the one in Theorem \ref{theorem:main_theorem'} seems more complicated.  
However, to compute $\deg X_{m}^{*}$ in practice, 
Theorem \ref{theorem:main_theorem'} is more efficient 
for the case when $m-n$ is small. For instance, we have

\begin{corollary}
Under the same assumption as in Theorem \ref{theorem:main_theorem}, 
for the case when $m = n+1$, we have 
$$
\deg X_{n+1}^{*} = 
\frac{1}{(n+1)^{n}} 
\deg X_{n}^{*}
\sum_{k=0}^{n} (-1)^{n-k}(n+1)^{k} \binom{N-1}{k}\binom{n+1}{n-k} 
.
$$
\end{corollary}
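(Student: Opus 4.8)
The plan is to derive this corollary as a direct specialization of Theorem \ref{theorem:main_theorem'} to the case $m = n+1$; the entire task reduces to simplifying the combinatorial data that appears once $m-n = 1$.

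First I would set $m := n+1$ in the formula of Theorem \ref{theorem:main_theorem'}. Then $m-n = 1$, so each partition $\lambda' = (\lambda'_1, \dots, \lambda'_{m-n})$ has a single part $\lambda' = (\lambda'_1)$, and the constraint $\vert \lambda' \vert = n-k$ forces $\lambda' = (n-k)$. Thus the inner summation over $\lambda'$ collapses to a single term. Moreover $\varepsilon' = ((N-m)^{m-n}) = (N-n-1)$ is itself a single part, and $M = n + (N-m)(m-n) = N-1$.

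Next I would evaluate the surviving quantities for $\lambda' = (n-k)$. Both $\lambda' = (n-k)$ and $\lambda'+\varepsilon' = (N-k-1)$ are one-row shapes, each admitting a unique standard Young tableau, so $f^{\lambda'} = f^{\lambda'+\varepsilon'} = 1$. The product reduces to its single factor at $i=1$, giving
\[
\prod_{i=1}^{m-n} \frac{(n+i)!}{(n+i-\lambda'_i)!} = \frac{(n+1)!}{(n+1-(n-k))!} = \frac{(n+1)!}{(k+1)!} .
\]

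Finally I would fold the residual factorials into a binomial coefficient. Combining the prefactor $1/(n-k)!$ from Theorem \ref{theorem:main_theorem'} with the product just computed yields $\frac{(n+1)!}{(n-k)!\,(k+1)!} = \binom{n+1}{n-k}$, which is exactly the factor in the statement. Substituting $M = N-1$ and all of the above back into the formula of Theorem \ref{theorem:main_theorem'} produces the claimed identity. There is no genuine obstacle here beyond bookkeeping: the only points requiring care are recognizing that single-part partitions give $f = 1$ for both the shape $\lambda'$ and the shape $\lambda'+\varepsilon'$, and identifying the factorial ratio $(n+1)!/[(n-k)!\,(k+1)!]$ with $\binom{n+1}{n-k}$.
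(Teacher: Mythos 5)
Your proposal is correct and follows exactly the paper's own proof: specialize Theorem \ref{theorem:main_theorem'} to $m=n+1$, observe that $M=N-1$, that each $\lambda'=(n-k)$ is a one-part partition so $f^{\lambda'}=f^{\lambda'+\varepsilon'}=1$, and simplify. The only difference is that you spell out the identification $\frac{(n+1)!}{(n-k)!\,(k+1)!}=\binom{n+1}{n-k}$, which the paper leaves implicit.
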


\begin{proof}
If $m=n+1$ in Theorem \ref{theorem:main_theorem'}, 
then 
$M=N-1$, 
$f^{\lambda'} =f^{\lambda'+\varepsilon'}=1$
for any $\lambda' = (\lambda'_{1})$, 
and 
$\lambda'_{1} = \vert \lambda'\vert = n-k$, 
hence the conclusion follows. 
\end{proof}

\begin{proof}[Proof of Corollary \ref{corollary:degree_approximation}]
It follows from 
\eqref{equation:hook_lengths}
that 
{\allowdisplaybreaks 
\begin{align*}
\frac{f^{\lambda+ \varepsilon}}{f^{\lambda}} 
&=
\frac{(n+\vert \varepsilon\vert)!}{n!}
\prod_{1 \le i \le N-m} 
\frac
{(N-m+\lambda_{i}-i)!}
{(N-n+\lambda_{i}-i)!}
,
\\
f^{\varepsilon}
&=
\vert \varepsilon\vert !
\prod_{1 \le i \le N-m} 
\frac{(N-m-i)!}{(N-n-i)!}
.
\end{align*}
}%
Therefore, 
using Theorem \ref{theorem:main_theorem} 
and \eqref{equation:degree_of_Grassmann_variety},  
we have 
{\allowdisplaybreaks 
\begin{align*}
&
\deg X_{m}^{*} 
=
\frac{\deg X_{n}^{*}}{n!(n+1)^{n}} 
\sum_{\vert\lambda\vert = n} 
(f^{\lambda})^{2}
\frac{(n+\vert \varepsilon\vert)!}{n!}
\prod_{1 \le i \le N-m} 
\frac
{(N-m+\lambda_{i}-i)!}
{(N-n+\lambda_{i}-i)!}
\frac{(n+i)!}{(n+i-\lambda_{i})!} 
,
\\ 
& 
\tbinom{n+ \dim G}{n}
\deg G
\deg X_{n}^{*}
\\
&
\phantom{\deg X_{m}^{*}}
=
\tbinom{n+\vert \varepsilon\vert}{n}
f^{\varepsilon}
\frac{\deg X_{n}^{*}}{n!(n+1)^{n}} 
\sum_{\vert\lambda\vert = n} 
(f^{\lambda})^{2}
\prod_{1 \le i \le N-m} 
\frac{(n+i)!}{(n+i-\lambda_{i})!} 
\\
&
\phantom{\deg X_{m}^{*}}
=
\tbinom{n+\vert \varepsilon\vert}{n}
\vert \varepsilon\vert !
\frac{\deg X_{n}^{*}}{n!(n+1)^{n}} 
\sum_{\vert\lambda\vert = n} 
(f^{\lambda})^{2}
\prod_{1 \le i \le N-m} 
\frac{(N-m-i)!}{(N-n-i)!}
\frac{(n+i)!}{(n+i-\lambda_{i})!} 
,
\end{align*}
}%
where $G := \mathbb G(m-n,N-n)$.
Thus it follows that 
{\allowdisplaybreaks 
\begin{align*}
& 
\deg X_{m}^* 
\left/
\left[ 
\tbinom{n+ \dim G}{n}
\deg G
\deg X_{n}^{*}
\right]
\right. 
\\&=
\left.
\sum_{\vert\lambda\vert = n} 
(f^{\lambda} )^{2}
D(\lambda)
\prod_{1 \le i \le N-m} 
\frac{(n+i)!}{(n+i-\lambda_{i})!}
\right/
{
\sum_{\vert\lambda\vert = n} 
(f^{\lambda} )^{2}
\prod_{1 \le i \le N-m} 
\frac{(n+i)!}{(n+i-\lambda_{i})!} 
}
,
\end{align*}
}%
where we set 
$$
D(\lambda) := 
\prod_{1 \le i \le n}
\frac
{\tbinom{N-m+\lambda_{i}-i}{\lambda_{i}}}
{\tbinom{N-n+\lambda_{i}-i}{\lambda_{i}}}
=
\prod_{1 \le i \le n}
\prod_{1 \le l \le \lambda_{i}}
\frac
{N-m+l-i}
{N-n+l-i}
.
$$
The conclusion now follows from Lemma \ref{lemma:D_lambda} below, 
where
\begin{equation*}
D((1^{n})) = 
\tbinom{N-m}{n}\left/\tbinom{N-n}{n}\right.
, 
\quad 
D((n)) =
\tbinom{N-m+n-1}{n}\left/\tbinom{N-1}{n}\right.
. 
\qedhere
\end{equation*}
\end{proof}

\begin{lemma}\label{lemma:D_lambda}
For any partition $\lambda$ with $\vert \lambda \vert =n$, 
we have 
$$
D((1^{n})) \le D(\lambda) \le D((n))
.
$$
\end{lemma}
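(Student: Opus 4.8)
My plan is to reinterpret $D(\lambda)$ as a product over the boxes of the Young diagram of $\lambda$. Writing $c(\square) := (\text{column}) - (\text{row})$ for the content of a box $\square$ (so the box in row $i$, column $l$ has $c(\square) = l-i$), the definition of $D(\lambda)$ becomes
\[
D(\lambda) = \prod_{\square \in \lambda} \frac{N-m+c(\square)}{N-n+c(\square)}.
\]
Setting $g(c) := (N-m+c)/(N-n+c)$, I would record two facts. First, since the Veronese setup $N+1=\binom{n+d}{d}$ with $d\ge 2$ forces $N\ge 2n$, every denominator satisfies $N-n+c \ge N-2n+1 \ge 1>0$; and $g'(c) = (m-n)/(N-n+c)^2 \ge 0$, so $g$ is nondecreasing on the range of contents, strictly increasing when $m>n$. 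Second, a short sign analysis shows $D(\lambda)\ge 0$ always: the only box that can kill a numerator has content $-(N-m)$, i.e.\ lies in row $N-m+1$ of the first column, and whenever a box of content $<-(N-m)$ is present (forcing $\ell(\lambda)>N-m$) that vanishing box is present too, so no partition can produce a strictly negative product; moreover $D(\lambda)=0$ exactly when $\ell(\lambda)>N-m$.

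The heart of the argument will be monotonicity of $D$ along the dominance order on partitions of $n$, whose unique minimal and maximal elements are $(1^n)$ and $(n)$. Each step up this order is realized by relocating a single box from some row to a strictly higher row while keeping a partition; such a raising move strictly increases the content of the moved box and fixes all other contents. In the regime where every factor is positive, the fact that $g$ is increasing then gives $D(\mu)\le D(\lambda)$ whenever $\mu\le\lambda$. Since every $\lambda\vdash n$ is joined to $(1^n)$ and to $(n)$ by chains of raising moves, this will yield $D((1^n))\le D(\lambda)\le D((n))$.

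It then remains to guarantee positivity along the chains and to dispose of the degenerate case. When $N-m\ge n$, every partition of $n$ has at most $n\le N-m$ rows, so the most negative content is $\ge-(n-1)>-(N-m)$, all numerators are positive, and the monotonicity argument applies verbatim. When $N-m<n$ one has $\ell((1^n))=n>N-m$, hence $D((1^n))=\tbinom{N-m}{n}/\tbinom{N-n}{n}=0$ by the convention $\tbinom{k}{n}=0$ for $k<n$, and the lower bound $D((1^n))\le D(\lambda)$ is immediate from $D(\lambda)\ge 0$. For the upper bound here, any $\lambda$ with $D(\lambda)=0$ satisfies $0\le D((n))$ trivially (note $\ell((n))=1\le N-m$ gives $D((n))>0$), while any $\lambda$ with $D(\lambda)>0$ has $\ell(\lambda)\le N-m$; since raising moves never increase the number of rows, the whole chain from $\lambda$ to $(n)$ stays in the positive regime, giving $D(\lambda)\le D((n))$. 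The case $m=n$ is trivial, as then $g\equiv 1$ and $D\equiv 1$.

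The main obstacle is exactly this sign bookkeeping: the factors $g(c)$ need not all be positive when $m$ is close to $N$, so one cannot blindly invoke the "increasing $g$'' monotonicity for every $\lambda$. The delicate point to get right is that a vanishing numerator (content $=-(N-m)$) is always forced to appear before any strictly negative numerator could, which both pins $D(\lambda)$ to be nonnegative and identifies its zero locus as $\ell(\lambda)>N-m$; this is precisely what lets the monotonicity run on the positive locus while the boundary cases fall out from nonnegativity alone.
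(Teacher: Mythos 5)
Your proof is correct and takes essentially the same route as the paper's: the paper likewise reduces the lemma to monotonicity of $D$ under moving a single box between two rows, verified by exactly your content comparison ($\lambda_i - i$ versus $(\lambda_j+1)-j$, combined with the monotonicity of $c \mapsto (N-m+c)/(N-n+c)$). The only difference is that you make explicit the sign bookkeeping (nonnegativity of $D$, vanishing exactly when $\ell(\lambda)>N-m$, and the trivial case $m=n$), which the paper's one-line ``easily verified'' passes over in silence.
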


\begin{proof}
It suffices to show that for a partition $\lambda$, 
if $\lambda_{i} > \lambda_{j}$ with $i < j$, 
then 
$$
D((\dots, \lambda_{i} , \dots, \lambda_{j} , \dots))
> 
D((\dots, \lambda_{i}-1 , \dots, \lambda_{j}+1 , \dots))
,
$$
which is easily verified since we have $\lambda_{i}-i>(\lambda_{j}+1)-j$. 
\end{proof}

\begin{remark}
It follows immediately from Corollary \ref{corollary:degree_approximation} 
that 
for each $m$ with $n \le m \le N-1$, 
\begin{equation*}
\deg X_{m}^* 
\left/ 
\left[ 
\tbinom{n+ \dim G}{n}
\deg G 
\deg X_{n}^* 
\right]\right. 
\le 
(\tfrac{N-m+n-1}{N-1})^{n}
,
\end{equation*}
and for each $m$ with $n \le m \le N-n$, 
\begin{equation*}
(\tfrac{N-m-n+1}{N-2n+1})^{n}
\le
\deg X_{m}^* 
\left/ 
\left[ 
\tbinom{n+ \dim G}{n}
\deg G 
\deg X_{n}^* 
\right]\right. 
.
\end{equation*}
\end{remark}

\begin{conjecture}
\label{conjecture:degree_approximation}
For the Veronese variety $X=v_d(\mathbb P^n)
\subseteq \mathbb P^{N}$ with 
$N+1 = \binom{n+d}{d}$ in characteristic $p=0$ 
and for each $m$ with $n \le m \le N-1$, 
we have 
\begin{equation*}
\deg X_{m}^* 
\le 
\left(
\frac{N-m}{N-n}
\right)^{n} 
\binom{n+ \dim G}{n}
\deg G 
\deg X_{n}^* 
, 
\end{equation*}
where $G:=\mathbb G(m-n,N-n)$ is 
the Grassmann variety parametrizing 
$(m-n)$-quotient spaces of an $(N-n)$-dimensional vector space. 
\end{conjecture}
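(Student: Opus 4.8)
The plan is to take the ratio already isolated in the proof of Corollary~\ref{corollary:degree_approximation} and recast it as an expectation against a content--weighted Plancherel measure, reducing the conjecture to one explicit combinatorial inequality. Write $a:=N-m$ and $b:=N-n$, and for a box $u$ of a Young diagram let $c(u):=\mathrm{col}(u)-\mathrm{row}(u)$ be its content. Two bookkeeping identities organize everything: reading off rows and columns, $\prod_{i=1}^{N-m}\frac{(n+i)!}{(n+i-\lambda_i)!}=\prod_{u\in\lambda}\bigl(n+1-c(u)\bigr)$; and the quantity $D(\lambda)$ from the proof of Corollary~\ref{corollary:degree_approximation} and Lemma~\ref{lemma:D_lambda} is $D(\lambda)=\prod_{u\in\lambda}\frac{a+c(u)}{b+c(u)}$. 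Feeding these into Theorem~\ref{theorem:main_theorem} together with the classical content identity $\sum_{|\lambda|=n}\frac{(f^{\lambda})^2}{n!}\prod_{u\in\lambda}(t+c(u))=t^{\,n}$ (equivalently $\sum_{\lambda}f^{\lambda}s_{\lambda}(1^{t})=t^{n}$), and setting $\mu(\lambda):=\frac{(f^{\lambda})^2}{n!}\prod_{u\in\lambda}\bigl(1-\tfrac{c(u)}{n+1}\bigr)$, I would prove the identity
\[
\frac{\deg X_m^*}{\binom{n+\dim G}{n}\,\deg G\,\deg X_n^*}=\sum_{|\lambda|=n}\mu(\lambda)\,D(\lambda).
\]
Applying the content identity to the conjugate diagram shows $\sum_{\lambda}\mu(\lambda)=1$, so $\mu$ is a genuine probability measure on partitions of $n$; and since every $\lambda$ with $\ell(\lambda)>N-m$ carries a box of content $-(N-m)$ and hence $D(\lambda)=0$, this formula is valid in the whole range $n\le m\le N-1$ with no separate treatment of the restricted sum. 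The conjecture is therefore equivalent to the content inequality
\[
F(a):=\sum_{|\lambda|=n}\mu(\lambda)\prod_{u\in\lambda}\frac{a+c(u)}{b+c(u)}\ \le\ \Bigl(\tfrac{a}{b}\Bigr)^{n}=:G(a),\qquad 1\le a\le b,
\]
which sharpens the bound $F(a)\le D((n))\le\bigl(\tfrac{N-m+n-1}{N-1}\bigr)^{n}$ already obtained in the Remark following Corollary~\ref{corollary:degree_approximation}.

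Next I would record the structural constraints any proof must meet. Both $F$ and $G$ are degree-$n$ polynomials in $a$; the corner box has content $0$, so every $D(\lambda)$ contains the factor $a$ and $F(0)=G(0)=0$, while $D(\lambda)\equiv1$ at $a=b$ gives $F(b)=G(b)=1$; thus $G-F$ vanishes at $a=0$ and $a=b$. For $n=1$ one has $\mu((1))=1$ and $F(a)=a/b=G(a)$, recovering the equality case consistent with Corollary~\ref{corollary:degree_formula_in_dim1}. For $n=2$ the polynomial $G-F$ is a quadratic with the two known roots $0,b$, and its leading coefficient $b^{-2}-E_{\mu}\bigl[\prod_{u}(b+c(u))^{-1}\bigr]$ is negative---because $\mu$ weights the column $(1,1)$ more heavily than the row $(2)$ and $x\mapsto1/x$ is convex and decreasing---so $G-F\ge0$ on $[0,b]$, settling the surface case. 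The genuine difficulty, already visible in Lemma~\ref{lemma:D_lambda}, is that for $n\ge2$ the \emph{largest} value $D((n))$ strictly exceeds $(a/b)^n$: no termwise bound can succeed, and the inequality must exploit the concentration of $\mu$ on the column-heavy diagrams (those with predominantly negative contents), where $D(\lambda)<(a/b)^n$.

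For general $n$ I would pursue three routes in increasing order of ambition. The first is induction on $n$ via the box-adding recursion for $\mu$: express $F$ at level $n$ through level $n-1$ and try to reduce the inductive step to the one-variable mechanism of the $n=2$ case, namely the concavity of $x\mapsto\frac{a+x}{b+x}$ combined with the fact that the $\mu$-expected content of the newly added box is non-positive. The second is to seek a closed hypergeometric evaluation of $F$; writing $F(a)=\frac{1}{(n+1)^{n}}\sum_{\lambda}f^{\lambda}\,s_{\lambda'}(1^{n+1})\,\frac{s_{\lambda}(1^{a})}{s_{\lambda}(1^{b})}$, one would linearize the denominator $\prod_{u}(b+c(u))^{-1}$ by a Beta-type integral $\frac{1}{b+c}=\int_0^1 t^{\,b+c-1}\,dt$ and resum by a dual Cauchy identity, then compare to $G(a)$ coefficientwise. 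The third, cleanest if it can be arranged, is a correlation (FKG-type) inequality on the Young lattice, using that the decreasing weight $\prod_u(1-\tfrac{c(u)}{n+1})$ and the dominance-increasing statistic $D(\lambda)$ are negatively correlated, forcing $E_\mu[D]$ below its ``balanced'' reference value $(a/b)^n$. The main obstacle, shared by all three routes, is the family of denominators $b+c(u)=(N-n)+c(u)$: the content identity evaluates only \emph{numerator} content-products in closed form, the denominators couple the boxes nonlinearly, and consequently the required monotonicity of $F/G$ in $a$ does not collapse to a statement about a single expected content. I expect the inductive route to be the most tractable, with the crux being to show that the box-addition step preserves the one-variable concavity uniformly over the positions of the already-placed boxes.
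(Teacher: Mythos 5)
You should first be aware of the context: the statement you attempted is not proved in the paper at all. It is stated as Conjecture \ref{conjecture:degree_approximation}, supported only by numerical experiments in {\tt Mathematica} and by the heuristic ``virtual splitting'' $\mathcal N \simeq \mathcal O_{\mathbb P^{n}}(\alpha)^{\oplus N-n}$ of Observation \ref{observation:virtual_decomposition}, which explains where the factor $\left(\frac{N-m}{N-n}\right)^{n}$ comes from but is not an argument. So there is no proof in the paper to compare against; the only question is whether your proposal itself proves the conjecture, and it does not. To your credit, the reduction you set up is sound and genuinely useful: the identities $\prod_{i=1}^{N-m}\frac{(n+i)!}{(n+i-\lambda_i)!}=\prod_{u\in\lambda}\bigl(n+1-c(u)\bigr)$ and $D(\lambda)=\prod_{u\in\lambda}\frac{(N-m)+c(u)}{(N-n)+c(u)}$ are correct; $\mu$ is indeed a probability measure by the conjugate form of the content identity; the vanishing $D(\lambda)=0$ for $\ell(\lambda)>N-m$ correctly absorbs the restriction on the number of parts; and the per-partition factors match those in the proof of Corollary \ref{corollary:degree_approximation}, since $f^{\varepsilon}D(\lambda)=\vert\varepsilon\vert!\prod_{i}\frac{(N-m+\lambda_i-i)!}{(N-n+\lambda_i-i)!}$, so your identity $\deg X_m^*\big/\bigl[\tbinom{n+\dim G}{n}\deg G\deg X_n^*\bigr]=\sum_{\vert\lambda\vert=n}\mu(\lambda)D(\lambda)$ holds. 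Your $n=1$ and $n=2$ cases check out (for $n=2$ the required sign amounts to $\frac{1}{3}\frac{1}{b+1}+\frac{2}{3}\frac{1}{b-1}\ge\frac{1}{b}$, which is true), and your observation that $D((n))>(a/b)^{n}$ for $n\ge 2$ correctly shows that no termwise bound can work.

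The gap is that the conjecture itself --- the inequality $\sum_{\vert\lambda\vert=n}\mu(\lambda)\prod_{u\in\lambda}\frac{a+c(u)}{b+c(u)}\le (a/b)^{n}$ for all $n$ --- is never established. What you offer for $n\ge 3$ is three candidate strategies (induction via box-adding, hypergeometric resummation, an FKG-type correlation inequality), each phrased conditionally (``I would pursue'', ``if it can be arranged''), each blocked by an obstacle you yourself identify (the denominators $b+c(u)$ couple the boxes nonlinearly and are not evaluated by any content identity), and none carried out even in outline for $n=3$. The mechanism that settles $n=2$ --- the difference $G-F$ is a quadratic with the two forced roots $a=0$ and $a=b$, so its sign on $[0,b]$ is controlled by one leading coefficient --- does not generalize: for $n\ge 3$ the polynomial $G-F$ has degree $n$ but still only two forced roots, so positivity on $[0,b]$ is no longer governed by a single coefficient, and you supply no replacement (indeed, nothing in the proposal rules out $G-F$ dipping negative at an interior integer value of $a$). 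As it stands, your text is a correct and potentially valuable reformulation plus a research plan, not a proof; the statement remains exactly as open after your argument as before it.
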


\begin{observation}
\label{observation:virtual_decomposition}
\begin{enumerate}
\item 
Numerical experiments 
by using {\tt Mathematica} 
suggest 
that the upper bound 
$\tbinom{N-m+n-1}{n}/\tbinom{N-1}{n}$
in Corollary \ref{corollary:degree_approximation}
could be replaced by 
$\big( \frac{N-m}{N-n} \big)^{n}$, 
as stated in Conjecture \ref{conjecture:degree_approximation}.
\item 
Set $\alpha := \frac{(n+1)(d-1)}{N-n}\in \mathbb Q$. 
We have $c_{1}(\mathcal N)=(n+1)(d-1)h =(N-n)\alpha h$ 
with $\rk \mathcal N = N-n$, 
where  $h := c_{1}(\mathcal O_{\mathbb P^{n}}(1))$.
Suppose  
\begin{equation*}\label{equation:virtual_splitting}
\mathcal N \simeq \mathcal O_{\mathbb P^{n}}(\alpha)^{\oplus N-n}
\end{equation*}
{\it virtually}. 
Then, applying the argument in the proof of 
Theorem \ref{theorem:degree_formula_Veronese_curve} 
with $p=0$, 
we obtain 
\begin{equation*}\label{euqation:maximum_case}
\deg X_{m}^{*} = 
\left( 
\frac{N-m}{N-n}
\right)^{n}
\binom{n+ \dim G}{n}
\deg G
\deg X_{n}^* . 
\end{equation*}
Note that the right-hand-side is not necessarily an integer in general. 
On the other hand, 
according to Corollary \ref{corollary:degree_formula_in_dim1}, 
this `equality' actually holds true when $n=1$,  
even if $\mathcal N$ is not decomposed as above, 
as it is mentioned in Introduction.
This is because 
$\deg X_{m}^{*}$ is determined just by $c_{1}(\mathcal N)$ when $n=1$, 
which we see from the proof of Corollary \ref{corollary:degree_formula_in_dim1}. 

\end{enumerate}
\end{observation}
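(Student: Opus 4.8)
The only provable content here is the displayed formula of part (2); part (1) is a numerical observation supporting Conjecture \ref{conjecture:degree_approximation} and requires no argument. The plan for (2) is to repeat the computation of Theorem \ref{theorem:degree_formula_Veronese_curve} word for word, replacing the genuine splitting $\mathcal N=\mathcal O_{\mathbb P^1}(2)^{\oplus d-1}$ available for curves by the formal splitting $\mathcal N\simeq\mathcal L^{\oplus N-n}$ with $\mathcal L:=\mathcal O_{\mathbb P^n}(\alpha)$ and $c_1(\mathcal L)=\alpha h$. First I would use Lemma \ref{lemma:tautological_line_bundle_small_grassmann} \eqref{lemma:tautological_line_bundle_small_grassmann_C_dual} to reduce to the tautological class on $\mathbb G_X(N-m,\mathcal N)$, and then apply Lemma \ref{lemma:tautological_line_bundle_twisted} \eqref{lemma:tautological_line_bundle_twisted_2} with $d=N-m$, $r=N-n$, identifying $\mathbb G_X(N-m,\mathcal N)$ with $\mathbb G(N-m,N-n)\times\mathbb P^n$ and carrying $c_1(\mathcal O_{\mathbb G_X(N-m,\mathcal N)}(1))$ to $p_1^{*}c_1(\mathcal O_{\mathbb G(N-m,N-n)}(1))+(N-m)\alpha\, p_2^{*}h$.

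Raising this class to the power $M=n+(N-m)(m-n)=\dim Z_m$ and integrating over the product, all terms vanish for dimension reasons except the one carrying $h^n$ on $\mathbb P^n$ and the top power of the Pl\"ucker class on $\mathbb G(N-m,N-n)$, so that
$$ d_m^{*}=\binom{M}{n}(N-m)^n\alpha^n\,\deg\mathbb G(N-m,N-n). $$
I would then substitute $\alpha^n=(n+1)^n(d-1)^n/(N-n)^n=\deg X_n^{*}/(N-n)^n$ (Proposition \ref{proposition:ordinary_gauss_map}), $M=n+\dim G$, and $\deg\mathbb G(N-m,N-n)=\deg G$ (the rectangles $\varepsilon$ and $\varepsilon'$ are conjugate, so $f^{\varepsilon}=f^{\varepsilon'}$; cf.\ \eqref{equation:degree_of_Grassmann_variety}), and finish with $\deg X_m^{*}=d_m^{*}$ in characteristic zero (Lemma \ref{lemma:deg_X_m}); this produces exactly the asserted right-hand side.

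The hard part is not this bookkeeping but the meaning of the ``virtual'' splitting: since $\alpha=\tfrac{(n+1)(d-1)}{N-n}$ is in general not an integer, $\mathcal O_{\mathbb P^n}(\alpha)$ is not an honest line bundle and $\mathcal N$ does not decompose. The computation above must be read formally in $A^{*}(\mathbb P^n)\otimes\mathbb Q$, introducing a formal class of first Chern class $\alpha h$ and verifying that both Lemma \ref{lemma:tautological_line_bundle_twisted} \eqref{lemma:tautological_line_bundle_twisted_2} and the push-forward of Fact \ref{fact:degree_formula_for_grassmann_bundles} persist after the Segre classes of $\mathcal N$ are replaced by those of the formal split bundle. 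This is exactly why the output need not be an integer and is only a heuristic for $\deg X_m^{*}$: for $n\ge 2$ the true degree involves the higher Schur classes $\varDelta_\lambda(s(\mathcal N))$, which are not recovered from $c_1(\mathcal N)$ alone. The identity does become rigorous when $n=1$, for then $\vert\lambda\vert=1$ forces $\lambda=(1,0,\dots)$ and $\varDelta_\lambda(s(\mathcal N))=c_1(\mathcal N)$, so by the proof of Corollary \ref{corollary:degree_formula_in_dim1} the degree depends on $c_1(\mathcal N)$ only, which the virtual splitting preserves; this matches the Introduction and motivates Conjecture \ref{conjecture:degree_approximation}.
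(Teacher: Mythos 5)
Your proposal is correct and follows essentially the same route as the paper: reduce via Lemma \ref{lemma:tautological_line_bundle_small_grassmann} \eqref{lemma:tautological_line_bundle_small_grassmann_C_dual} to the tautological class on $\mathbb G_X(N-m,\mathcal N)$, split it formally via Lemma \ref{lemma:tautological_line_bundle_twisted} \eqref{lemma:tautological_line_bundle_twisted_2}, expand the $M$-th power picking out the $h^n$ term, and conclude with Lemma \ref{lemma:deg_X_m}, exactly as the paper intends by ``applying the argument in the proof of Theorem \ref{theorem:degree_formula_Veronese_curve}.'' Your added remarks --- that the identity is only formal for $n\ge 2$ since the true degree depends on the higher Schur classes $\varDelta_\lambda(s(\mathcal N))$, while for $n=1$ it is rigorous because only $c_1(\mathcal N)$ enters --- match the paper's own caveats precisely.
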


\subsection{Low dimensional cases}

\begin{theorem}\label{theorem:degree_formula_Veronese_surface}
Let $X = v_{d}(\mathbb P^{2}) \subseteq \mathbb P^{N}$ 
be the Veronese surface with 
$N+1 = \binom{d+2}{2}$ 
in characteristic $p=0$.  
For each $m$ with $n \le m \le N-1$, 
setting $e:= N-m$, we have 
{\allowdisplaybreaks 
\begin{align*}
\deg X_{m}^{*} 
&= 
\frac{e(3eN-N-5e-1)}{3(N-1)(N-2)(N-3)}
\\ & 
\hskip 96pt 
\binom{2+\dim \mathbb G(m-2,N-2)}{2}
\deg \mathbb G(m-2,N-2) \deg X_2^* 
\end{align*}
}%
with $\deg X_{2}^{*} = 3^{2}(d-1)^{2}$. 
\end{theorem}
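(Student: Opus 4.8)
The plan is to specialize the main formula, Theorem \ref{theorem:main_theorem}, to $n=2$. Since there are exactly two partitions of $2$, namely $(2)$ and $(1,1)$ (each padded with zeros to length $e:=N-m$), the sum over $\vert\lambda\vert=2$ collapses to two terms, and the whole computation reduces to evaluating these two summands explicitly. I record at the outset that $\deg X_2^*=3^2(d-1)^2$ by Proposition \ref{proposition:ordinary_gauss_map} \eqref{proposition:ordinary_gauss_map_Veronese}, that $f^{(2)}=f^{(1,1)}=1$, and that $\varepsilon=((m-2)^{e})$, so that $f^{\varepsilon}=\deg G$ and $\vert\varepsilon\vert=\dim G$ with $G:=\mathbb G(m-2,N-2)$ by \eqref{equation:degree_of_Grassmann_variety}.

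First I would compute the two products $\prod_{i=1}^{e}(2+i)!/(2+i-\lambda_i)!$ appearing in Theorem \ref{theorem:main_theorem}. For $\lambda=(2,0,\dots,0)$ only the $i=1$ factor is nontrivial and the product equals $3!/1!=6$; for $\lambda=(1,1,0,\dots,0)$ the $i=1$ and $i=2$ factors give $(3!/2!)(4!/3!)=12$, the rest being $1$. Since $f^{(2)}=f^{(1,1)}=1$ and $\tfrac{1}{2!\,3^{2}}=\tfrac{1}{18}$, Theorem \ref{theorem:main_theorem} then yields $\deg X_m^*=\tfrac13\deg X_2^*\,\bigl(f^{(2)+\varepsilon}+2f^{(1,1)+\varepsilon}\bigr)$.

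Next I would rewrite $f^{(2)+\varepsilon}$ and $f^{(1,1)+\varepsilon}$ in terms of $f^{\varepsilon}=\deg G$, using the identities for $f^{\lambda+\varepsilon}/f^{\lambda}$ and for $f^{\varepsilon}$ already derived in the proof of Corollary \ref{corollary:degree_approximation} (equivalently, straight from the hook length formula \eqref{equation:hook_lengths}). Their quotient gives $f^{\lambda+\varepsilon}/f^{\varepsilon}=\binom{2+\dim G}{2}\prod_{i}\prod_{l=1}^{\lambda_i}\tfrac{e-i+l}{N-2-i+l}$, whence $f^{(2)+\varepsilon}/f^{\varepsilon}=\binom{2+\dim G}{2}\tfrac{e(e+1)}{(N-1)(N-2)}$ and $f^{(1,1)+\varepsilon}/f^{\varepsilon}=\binom{2+\dim G}{2}\tfrac{e(e-1)}{(N-2)(N-3)}$, where $\dim G=(m-2)e$.

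Finally I would substitute these into the two-term expression, factor out $\tfrac{e}{N-2}$, and combine the bracket $\tfrac{e+1}{N-1}+\tfrac{2(e-1)}{N-3}$ over the common denominator $(N-1)(N-3)$; its numerator simplifies to $3eN-N-5e-1$, and together with the prefactor $\tfrac13$ this produces the denominator $3(N-1)(N-2)(N-3)$, so one recovers exactly the stated formula with $f^{\varepsilon}=\deg\mathbb G(m-2,N-2)$. The argument is entirely a specialization of Theorem \ref{theorem:main_theorem} followed by bookkeeping, so the only delicate step is the final algebraic simplification of the numerator, where a sign or coefficient slip would be easy to make; I would therefore carry out that cancellation with care, and as a check verify that setting $m=2$ (so $e=N-2$, $\dim G=0$, $\deg G=1$) returns $\deg X_2^*$, since the numerator then factors as $3(N-1)(N-3)$.
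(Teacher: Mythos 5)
Your proposal is correct and follows essentially the same route as the paper's own proof: specialize Theorem \ref{theorem:main_theorem} to $n=2$, reduce the sum to the two partitions $(2)$ and $(1,1)$ with the products $6$ and $12$, express $f^{(2)+\varepsilon}$ and $f^{(1,1)+\varepsilon}$ as $\binom{2+\dim G}{2}\tfrac{e(e+1)}{(N-1)(N-2)}f^{\varepsilon}$ and $\binom{2+\dim G}{2}\tfrac{e(e-1)}{(N-2)(N-3)}f^{\varepsilon}$ via the hook length formula \eqref{equation:hook_lengths}, and combine to get the numerator $3eN-N-5e-1$. Your routing of the hook-length computation through the identities in the proof of Corollary \ref{corollary:degree_approximation} is just a repackaging of the paper's direct use of \eqref{equation:hook_lengths}, and your $m=2$ consistency check is a nice (correct) addition.
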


\begin{proof}
It follows from Theorem \ref{theorem:main_theorem} that
\begin{equation*}
\deg X_{m}^{*} 
= 
\frac{1}{2! (2+1)^{2}}\deg X_{2}^{*} 
\sum_{\vert \lambda \vert = 2}
f^{\lambda}f^{\lambda+\varepsilon} 
\prod_{i=1}^{N-m} 
\frac{(2+i)!}{(2+i-\lambda_{i})!}
,
\end{equation*}
where $\varepsilon = ((m-2)^{N-m})$. 
In the summation above, 
$\lambda$ runs in 
$\{ (2)  , (1^{2})   \}$, and 
we have 
$f^{(2)}=f^{(1^{2})}=1$ (Example \ref{example:Veronese_surface}), 
where 
$(2):=(2,0^{N-m-1})$, 
$(1^{2}):= (1^{2},0^{N-m-2})$. 
Thus it follows that 
{\allowdisplaybreaks 
\begin{align*}
\deg X_{N-1}^{*} 
&
=
 \frac{1}{3}
 f^{(m)} 
\deg X_{2}^{*} 
,
\\
\deg X_{m}^{*} 
&= 
 \frac{1}{3}
\big( 
 f^{(m,(m-2)^{{e}-1})} + 
2  f^{((m-1)^{2},(m-2)^{{e}-2})}
\big)
\deg X_{2}^{*} 
,
\end{align*}
}%
where $e:= N-m$ with $2 \le m \le N-2$. 
It follows from \eqref{equation:hook_lengths} that 
{\allowdisplaybreaks 
\begin{align*}
f^{(m,(m-2)^{{e}-1})} &= 
\frac{({e}+1){e}}{(N-1)(N-2)}
\tbinom{M}{2} 
f^{\varepsilon} 
,
\\
f^{((m-1)^{2},(m-2)^{{e}-2})} &= 
\frac{{e}({e}-1)}{(N-2)(N-3)}
\tbinom{M}{2} 
f^{\varepsilon}
, 
\end{align*}
}%
where $M:=2+\vert \varepsilon \vert $. 
Therefore, for each $m$ with $n \le m \le N-1$ we have 
{\allowdisplaybreaks 
\begin{align*}
\deg X_{m}^{*} 
&= 
 \frac{1}{3}
 \left( 
 \frac{({e}+1){e}}{(N-1)(N-2)}
 + 
2 \frac{{e}({e}-1)}{(N-2)(N-3)}
\right)
\tbinom{M}{2} 
f^{\varepsilon}
\deg X_{2}^{*} 
\\&=
\frac{e(3eN-N-5e-1)}{3(N-1)(N-2)(N-3)}
\tbinom{M}{2} 
f^{\varepsilon}
\deg X_{2}^{*} 
.
\end{align*}
}%
The conclusion follows from 
Proposition \ref{proposition:ordinary_gauss_map}
\eqref{proposition:ordinary_gauss_map_Veronese} 
and 
\eqref{equation:degree_of_Grassmann_variety}.  
\end{proof}

\begin{theorem}\label{theorem:degree_formula_Veronese_3fold}
Let $X = v_{d}(\mathbb P^{3}) \subseteq \mathbb P^{N}$ 
be the Veronese $3$-fold with 
$N+1 = \binom{d+3}{3}$ 
in characteristic $p=0$. 
For each $m$ with $n \le m \le N-1$,
setting $e:= N-m$, we have 
{\allowdisplaybreaks 
\begin{align*}
\deg X_{m}^{*} 
&= 
\frac{e(
(8e^{2}-6e+1)N^{2} 
+(-42e^{2}+9e+6)N
+5(8e^{2}+3e+1)
)}
{8(N-1)(N-2)(N-3)(N-4)(N-5)}
\\ & 
\hskip 96pt 
\binom{3+\dim \mathbb G(m-3,N-3)}{3}
\deg \mathbb G(m-3,N-3) 
\deg X_3^* 
\end{align*}
}%
with $\deg X_{3}^{*} = 4^{3}(d-1)^{3}$. 
\end{theorem}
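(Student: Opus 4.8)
The plan is to follow verbatim the strategy of Theorem~\ref{theorem:degree_formula_Veronese_surface}, now with $n=3$, pushing the three partitions of $3$ through Theorem~\ref{theorem:main_theorem}. First I would specialize Theorem~\ref{theorem:main_theorem} to $n=3$, giving
$$
\deg X_m^* = \frac{1}{3!\,4^{3}}\deg X_3^* \sum_{|\lambda|=3} f^{\lambda} f^{\lambda+\varepsilon}\prod_{i=1}^{N-m}\frac{(3+i)!}{(3+i-\lambda_i)!},
$$
where $\varepsilon=((m-3)^{N-m})$ and $\lambda$ ranges over $(3)$, $(2,1)$, $(1^{3})$ with $f^{(3)}=f^{(1^{3})}=1$ and $f^{(2,1)}=2$. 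Evaluating the product $\prod_i (3+i)!/(3+i-\lambda_i)!$ on these three partitions yields $24$, $60$, $120$, so that, writing $e:=N-m$, the sum collapses to
$$
\deg X_m^* = \frac{1}{16}\deg X_3^*\Big(f^{(m,(m-3)^{e-1})} + 5\,f^{(m-1,m-2,(m-3)^{e-2})} + 5\,f^{((m-2)^{3},(m-3)^{e-3})}\Big).
$$

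Next I would evaluate the three standard-tableau counts by the Hook Length Formula~\eqref{equation:hook_lengths}, exactly as in the surface case. Setting $M:=3+|\varepsilon|$, each is a rational function in $N$ and $e$ times $\binom{M}{3}f^{\varepsilon}$; explicitly I expect
$$
f^{(m,(m-3)^{e-1})} = \frac{e(e+1)(e+2)}{(N-1)(N-2)(N-3)}\binom{M}{3}f^{\varepsilon},\qquad f^{(m-1,m-2,(m-3)^{e-2})} = \frac{2e(e+1)(e-1)}{(N-2)(N-3)(N-4)}\binom{M}{3}f^{\varepsilon},
$$
$$
f^{((m-2)^{3},(m-3)^{e-3})} = \frac{e(e-1)(e-2)}{(N-3)(N-4)(N-5)}\binom{M}{3}f^{\varepsilon},
$$
the factor $2$ in the middle being $f^{(2,1)}$. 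Substituting and clearing the common denominator $(N-1)(N-2)(N-3)(N-4)(N-5)$ reduces everything to the single bracket
$$
e\Big[(e+1)(e+2)(N-4)(N-5) + 10(e+1)(e-1)(N-1)(N-5) + 5(e-1)(e-2)(N-1)(N-2)\Big].
$$

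Then I would expand this bracket as a polynomial in $N$, collect the coefficients of $N^{2}$, $N$, $1$, and verify that it equals $2e\big((8e^{2}-6e+1)N^{2}+(-42e^{2}+9e+6)N+5(8e^{2}+3e+1)\big)$; the resulting factor $2$ converts the prefactor $\tfrac1{16}$ into the $\tfrac18$ in the statement. To conclude I would rewrite $f^{\varepsilon}=\deg\mathbb{G}(m-3,N-3)$ and $|\varepsilon|=\dim\mathbb{G}(m-3,N-3)$ by~\eqref{equation:degree_of_Grassmann_variety}, so that $\binom{M}{3}=\binom{3+\dim\mathbb{G}(m-3,N-3)}{3}$, and invoke Proposition~\ref{proposition:ordinary_gauss_map}\eqref{proposition:ordinary_gauss_map_Veronese} for $\deg X_3^*=4^{3}(d-1)^{3}$.

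The conceptual input is identical to the surface case; the hard part will be purely the bookkeeping of the Hook Length Formula for the three shapes $(m,(m-3)^{e-1})$, $(m-1,m-2,(m-3)^{e-2})$, $((m-2)^{3},(m-3)^{e-3})$ together with the ensuing polynomial algebra, where a single slip in the $N^{1}$ or constant coefficient would corrupt the final numerator. A convenient internal check is the specialization $e=1$ (the case $m=N-1$): there only $\lambda=(3)$ survives, both my reduction and the claimed formula must return $\deg X_{N-1}^*=\tfrac1{16}\deg X_3^*$, and one notes that the $(e-1)$ and $(e-2)$ factors make the formula automatically discard the partitions that fail to fit into $e$ rows for small $e$.
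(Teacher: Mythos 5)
Your proposal is correct and coincides with the paper's own proof: the same specialization of Theorem \ref{theorem:main_theorem} to $n=3$, the same collapse to the coefficients $1,5,5$ on the three shifted shapes, the same hook-length evaluations (including the factor $2$, which the paper also records), and the same final polynomial identity and identifications via \eqref{equation:degree_of_Grassmann_variety} and Proposition \ref{proposition:ordinary_gauss_map}\eqref{proposition:ordinary_gauss_map_Veronese}. Your $e=1$ sanity check and the observation that the $(e-1)$, $(e-2)$ factors automatically handle $m=N-1$, $m=N-2$ correspond to the paper's separate listing of $\deg X_{N-1}^{*}$ and $\deg X_{N-2}^{*}$.
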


\begin{proof}
It follows from Theorem \ref{theorem:main_theorem} that
\begin{equation*}
\deg X_{m}^{*} 
= 
\frac{1}{3!(3+1)^{3}}\deg X_{3}^{*} 
\sum_{\vert \lambda \vert = 3} 
f^{\lambda}f^{\lambda+\varepsilon}
\prod_{i=1}^{N-m} 
\frac{(3+i)!}{(3+i-\lambda_{i})!}
,
\end{equation*}
where $\varepsilon = ((m-3)^{N-m})$. 
In the summation above, 
$\lambda$ runs in 
$\{ (3) , (2,1) , (1^{3})   \}$, and 
we have 
$f^{(3)}=f^{(1^{3})}=1$ and $f^{(2,1)}=2$, 
where $(3):=(3,0^{N-m-1})$, 
$(2,1):= (2,1,0^{N-m-2})$,  
$(1^{3}):= (1^{3},0^{N-m-3})$. 
Thus it follows that 
{\allowdisplaybreaks 
\begin{align*}
\deg X_{N-1}^{*} 
&=\frac{1}{16}
  f^{(m)}
\deg X_{3}^{*}
,
\\
\deg X_{N-2}^{*} 
&=\frac{1}{16}
\big( 
  f^{(m,m-3)} + 
5  f^{(m-1,m-2)} 
\big)
\deg X_{3}^{*}
,
\\
\deg X_{m}^{*} 
&=\frac{1}{16}
\big( 
  f^{(m,(m-3)^{{e}-1})} + 
5  f^{(m-1,m-2,(m-3)^{{e}-2})} +
5  f^{((m-2)^{3},(m-3)^{{e}-3})}
\big)
\deg X_{3}^{*}
,
\end{align*}
}
where $e:= N-m$ with $3 \le m \le N-3$. 
It follows from \eqref{equation:hook_lengths} that 
{\allowdisplaybreaks 
\begin{align*}
f^{(m,(m-3)^{{e}-1})} &= 
\frac{({e}+2)({e}+1){e}}{(N-1)(N-2)(N-3)}
\tbinom{M}{3} 
f^{\varepsilon} 
,
\\
f^{(m-1,m-2,(m-3)^{{e}-2})} &= 
\frac{2({e}+1){e}({e}-1)}{(N-2)(N-3)(N-4)}
\tbinom{M}{3} 
f^{\varepsilon} 
, 
\\
f^{((m-2)^{3},(m-3)^{{e}-3})} &= 
\frac{{e}({e}-1)({e}-2)}{(N-3)(N-4)(N-5)}
\tbinom{M}{3} 
f^{\varepsilon} 
, 
\end{align*}
}%
where $M:=3+\vert \varepsilon \vert $. 
Therefore, for each $m$ with $n \le m \le N-1$ we have 
{\allowdisplaybreaks 
\begin{align*}
& 
\deg X_{m}^{*}
\\&=\frac{1}{16}
\left( 
 \frac{({e}+2)({e}+1){e}}{(N-1)(N-2)(N-3)}
+\frac{10({e}+1){e}({e}-1)}{(N-2)(N-3)(N-4)}
+\frac{5{e}({e}-1)({e}-2)}{(N-3)(N-4)(N-5)}
\right)
\\ & 
\hskip 96pt 
\tbinom{M}{3} 
f^{\varepsilon} 
\deg X_{3}^{*}
\\&=
\frac{e((8e^{2}-6e+1)N^{2} 
+(-42e^{2}+9e+6)N
+5(8e^{2}+3e+1))}
{8(N-1)(N-2)(N-3)(N-4)(N-5)}
\tbinom{M}{3} 
f^{\varepsilon} 
\deg X_{3}^{*}
. 
\end{align*}
}%
The conclusion follows from 
Proposition \ref{proposition:ordinary_gauss_map}
\eqref{proposition:ordinary_gauss_map_Veronese} 
and 
\eqref{equation:degree_of_Grassmann_variety}. 
\end{proof}

\bigskip

\noindent%
\textit{Acknowlegments.} 
The author would like to thank Professor Fyodor L. Zak, 
who gave me detailed comments and expert advice. 
The author would like to thank Professor Shigeharu Takayama, too: 
the present work was started by his question on higher Gauss maps.  
The author wishes to thank Professor Satoru Fukasawa 
for useful comments and invaluable advice, 
and Professor Tomohide Terasoma for useful discussion. 
Finally the author would like to thank 
Professor Wu-yen Chuang,  
Professor Jiun-Cheng Chen,  
Professor Jungkai Chen 
and 
Professor Katsuhisa Furukawa 
for inviting me to the mini-conference on algebraic geometry (March 6, 2015) at 
National Center for Theoretical Sciences (NCTS), 
and for their warm hospitality throughout his stay in Taipei: 
In fact, the present work was done partly at NCTS. 

The author is supported by JSPS KAKENHI Grant Number 25400053 and 16K05113.

\end{document}